\newtheorem{theorem}{Theorem}[section]
\newtheorem{corollary}{Corollary}[section]
\newtheorem{lemma}{Lemma}[section]
\newtheorem{proposition}[theorem]{Proposition}
\newtheorem{remark}{Remark}[section]
\newtheorem{openquestion}{Open Question}
\numberwithin{equation}{section}
\def\R{{\mathbb R}}
\def\N{{\mathbb N}}
\newcommand{\rlemma}[1]{Lemma~\ref{#1}}
\newcommand{\rth}[1]{Theorem~\ref{#1}}
\newcommand{\rprop}[1]{Proposition~\ref{#1}}
\def\st{{\mathbb S}^2}
\def\SNO{\omega_{N-1}}
\def\H1O{H^1(\Omega;\st)}
\def\dist{\operatorname{dist}}
\def\Dist{\operatorname{Dist}}
\def\div{\operatorname{div}}
\def\DistH1{\Dist_{H^1}}
\def\distH1{\dist_{H^1}}
 \newcommand{\Prod}{\mathop{\prod}\limits}
\mathchardef\mhyphen="2D
\title[Best constants for critical exponent Sobolev embeddings]{Best constants for two families of higher order critical Sobolev embeddings}
\author[I. Shafrir]{Itai Shafrir}
\address{Department of Mathematics\hfill\break\indent 
Technion - Israel Institute of Technology\hfill\break\indent
32000 Haifa, Israel\hfill\break\indent }
\email{shafrir@math.technion.ac.il}
\author[D. Spector]{Daniel Spector}
\address{Department of Applied Mathematics\hfill\break\indent 
National Chiao Tung University\hfill\break\indent
1001 Ta Hsueh Road.\hfill\break\indent 
Hsinchu, Taiwan R.O.C.}
\address{
National Center for Theoretical Sciences\hfill\break\indent 
National Taiwan University\hfill\break\indent
No. 1 Sec. 4 Roosevelt Rd.\hfill\break\indent 
Taipei, 106, Taiwan R.O.C.}
\email{dspector@math.nctu.edu.tw}
\begin{document}

\subjclass[2010]{Primary 46E35, Secondary 35A23}

\maketitle

\begin{abstract}
In this paper we obtain the best constants in some higher order Sobolev inequalities in the critical exponent.  These inequalities can be separated into two types:  those that embed into $L^\infty(\mathbb{R}^N)$ and those that embed into slightly larger target spaces.  Concerning the former, we show that for $k \in \{1,\ldots, N-1\}$, $N-k$ even, one has an optimal constant $c_k>0$ such that
\[ \|u\|_{L^\infty} \leq c_k \int |\nabla^k (-\Delta)^{(N-k)/2} u|\]
for all $u \in C^\infty_c(\mathbb{R}^N)$ (the case $k=N$ was handled in \cite{Shafrir:2017}). Meanwhile the most significant of the latter is a variation of D. Adams' higher order inequality of J. Moser:  For $\Omega \subset \mathbb{R}^N$, $m \in \mathbb{N}$ and $p=\frac{N}{m}$, there exists $A>0$ and optimal constant $\beta_0>0$ such that
\[ \int_{\Omega} \exp (\beta_0 |u|^{p^\prime}) \leq A |\Omega| \]
for all $u$ such that $\|\nabla^m u\|_{L^p(\Omega)} \leq 1$, where $\|\nabla^m u\|_{L^p(\Omega)}$ is the traditional semi-norm on the space $W^{m,p}(\Omega)$.
 \end{abstract}

\section{Introduction and Main Results}
Let $\Omega \subset \mathbb{R}^N$ be open, bounded and smooth or all of $\mathbb{R}^N$, $m \in \mathbb{N}$, and $p \in [1,\infty)$.  Further denote by $W^{m,p}(\Omega)$ the Sobolev space of functions in $L^p(\Omega)$ whose $m$th order distributional derivatives are also in $L^p(\Omega)$.  It is well known that the case $mp=N$ is a limiting one for the embedding of $W^{m,p}(\Omega)$ into $L^\infty(\Omega)$. Indeed, the embedding holds for $mp>N$ and fails to hold for $mp<N$. The critical exponent $mp=N$ is more delicate. The exception here is $W^{N,1}(\Omega)$, for which the embedding in $L^\infty(\Omega)$ does hold.  For all other values of $p=N/m>1$, $W^{m,p}(\Omega)$ does not embed into $L^\infty(\Omega)$.  In this regime, instead of boundedness one can show the local integrability of $\exp({a|u|^{p^\prime}})$ for some $a>0$. Our main concern here is the issue of {\em best constants} in the aforementioned embeddings and related ones.

We start with the exceptional case $p=1$. In a recent paper \cite{Shafrir:2017}, the first author has found the best constant for the embedding of the space ${W}^{N,1}(\mathbb{R}^N)$ into $L^\infty(\mathbb{R}^N)$. That is, he computed the 
 smallest constant $c_N>0$ for which the inequality
\begin{align}
\sup_{x \in \mathbb{R}^N} |u(x)| \leq c_N \int_{\mathbb{R}^N} |\nabla^N u(x)|\,dx\, \label{IS}
\end{align}
holds for all $u \in {W}^{N,1}(\mathbb{R}^N)$.  An earlier result by Humbert and Nazaret~\cite{hn} established this results in dimension $N=1,2$.  Moreover, it was proved in \cite{hn} for $N=2$ and in \cite{Shafrir:2017}  for $N\geq2$, that  the inequality in \eqref{IS} is strict for any non-trivial $u$ (i.e., if $u$ is not the zero function). Here and in the sequel, for $k \in \mathbb{N}$, we denote by 
\begin{equation*}
\nabla^k u(x)=\left\{\frac{\partial^k u}
{\partial_{x_{i_1}}\dots\partial_{x_{i_k}}}\right\}_{i_1,\ldots,i_k\in {\mathcal I}_N } (\text{where }{\mathcal I}_N=\{1,\ldots,N\})\,,
\end{equation*}
the tensor consisting of the $N^k$  partial derivatives of $u$ of order $k$ at the point $x$ and by 
 $|\nabla^k u|(x)$ the Euclidean norm of this vector in  $\R^{N^k}$. Note that by scaling invariance proporties of the quantities involved, \eqref{IS} implies, and actually is equivalent to, an analogous inequality for functions in $W^{N,1}_0(\Omega)$ for {\em any} subdomain $\Omega\subset\R^N$. 
 
 The key observation in \cite{Shafrir:2017} that led to \eqref{IS} was the identification of $\log |x|$ as a fundamental solution of  a certain elliptic operator of order $2N$, namely  
 \begin{align}
 \label{eq:shaf}
 (-1)^{N} \div_N \left( |x|^N \nabla^N \log |x| \right) = \mu_N \delta_0.
 \end{align}
 Above and in the sequel we denote, for a tensor $T$, consisting of $N^k$ components, 
 \begin{equation}\label{eq:div_k}
 \div_k(T)=\sum_{i_1,\ldots,i_k\in {\mathcal I}_N}\frac{\partial^k T_{i_1,\ldots,i_k}}
 {\partial_{x_{i_1}}\dots\partial_{x_{i_k}}}\,.
 \end{equation}
 One of our main observations here is that \eqref{eq:shaf} is just a special case of  a family of equations satisfied by $\log |x|$, namely
 \begin{align}
 (-1)^k \div_k (-\Delta)^{(\alpha-k)/2} \left(|x|^{2\alpha-N} \nabla^k (-\Delta)^{(\alpha-k)/2} \log |x|\right) = \mu_{k,\alpha} \delta_0 \label{log_PDE}
 \end{align}
for $k\in\N$ and $\alpha\in[k,\infty)$. Actually, in order to let \eqref{log_PDE}  make sense also when $(\alpha-k)/2$ is not an integer, we consider an equivalent form of it in \eqref{eq:37} below.
From \eqref{log_PDE} we deduce below in Theorem~\ref{thm2} that for each $k \in \{1,\ldots,N-1\}$ one has the following  inequality:
 \begin{align}
 \sup_{x \in \mathbb{R}^N} |u(x)| \leq c_k\int_{\mathbb{R}^N} |\nabla^k (-\Delta)^{(N-k)/2} u(x)|\,dx\,, \label{eq:k_inequality}
 \end{align}
 for all $u \in C^\infty_c(\mathbb{R}^N)$.  In the case where $N-k$ is even we prove that the constant $c_k$ 
 we have found in \eqref{eq:k_inequality} is also optimal. We note that in the case $k=1$ inequality \eqref{eq:k_inequality} could alternatively be deduced from a recent work of the second author and Rahul Garg concerning the mapping properties of the Riesz potential \cites{Garg-Spector:2015,Garg-Spector:2015-2} (although this inequality was not stated explicitly in \cites{Garg-Spector:2015,Garg-Spector:2015-2}, it follows from Lemma 3.1 in \cite{Garg-Spector:2015-2} and the relationship of the fractional Laplacian and Riesz potentials - see below for precise definitions).

Our results extend in the usual way to the completion of $C^\infty_c(\R^N)$ with respect to the semi-norm
\begin{align*}
| u|_{X^{N,k}} := \int_{\mathbb{R}^N} |\nabla^k (-\Delta)^{(N-k)/2} u(x)|\,dx\,.
\end{align*}
If we denote these spaces by $\{X^{N,k}\}_{k=1}^N$, then one observes they can be partitioned into two nested families of spaces
\begin{equation}
\label{eq:nested}
\begin{aligned}
\dot{W}^{N,1}(\mathbb{R}^N) &= X^{N,N} \subset X^{N,N-2} \subset\dots \subset X^{N,N \text{ mod } 2} \\
&\;\;\;\;X^{N,N-1} \subset X^{N,N-3} \subset\dots \subset X^{N,N-1\text{ mod }2}.
\end{aligned}
\end{equation}
Simple pointwise inequalities show that one has the preceding inclusions, while the fact that all the inclusions in \eqref{eq:nested} are actually {\em strict} is a direct consequence of Ornstein's celebrated theorem, see \cite{ornstein} (we are indebted to Petru Mironescu for informing us about this result).  

Thus if one rewrites \eqref{eq:k_inequality} as the family of inequalities
\begin{align}
\sup_{x \in \mathbb{R}^N} |u(x)| \leq c_k| u|_{X^{N,k}},\label{k_inequality}
\end{align}
for $k \in \{1,\ldots,N\}$, our work asserts that such a $c_k<+\infty$ (which could also be deduced from classical potential representations and Sobolev embeddings on the Lorentz scale), and in the case $N-k$ is even, it cannot be improved.  In this framework it is natural also to consider the case $k=0$, i.e.~the space $X^{N,0}$  associated with the semi-norm 
\begin{align*}
| u|_{X^{N,0}} := \int_{\mathbb{R}^N} |(-\Delta)^{N/2} u(x)|\,dx.
\end{align*}
 Here,  for $k=0$, in strict contrast with the case $k\ge1$, the analogue to \eqref{k_inequality} is {\em false}, as $X^{N,0}$ does not embed into $L^\infty(\R^N)$. We will discuss in more detail this issue below.

In order to give the explicit expression for $c_k$ in \eqref{eq:k_inequality} we need to recall the work of Morii, Sato, and Sawano \cite{MoriiSatoSawano} on the Euclidean norm of the derivatives of certain radial functions.  In particular we require firstly their result concerning the function $\log |x|$ that
\begin{align}
| \nabla^k \log |x||^2 = \frac{\ell^k_N}{|x|^{2k}},\;\; x \neq 0,\label{log_equality}
\end{align}
for a combinatorial constant $\ell_N^k$ (see \eqref{eq:6} for  its explicit value), and additionally the following:  for each integer $k\ge 1$ and $s\in\R$ 
there is a positive constant  $\lambda^{s,k}_N$ (denoted by $\gamma^{s,k}_N$ in
\cite{MoriiSatoSawano}) that satisfies
\begin{equation}
\label{eq:5}
| \nabla^k |x|^s|^2 = \frac{\lambda^{s,k}_N}{|x|^{2(k-s)}},\;\; x \neq 0\,,
\end{equation}
 see \eqref{eq:7}.
With these ingredients we can give the following result on the sharp constant in the inequalities \eqref{k_inequality}.
\begin{theorem}\label{thm2}
For  $k \in \{1,\ldots, N-1\}$  set $c_k:={\left(\lambda_N^{k-N,k}\right)}^{-1/2}{\gamma(N-k)}^{-1}$.  Then,
\begin{equation}
\sup_{x \in \mathbb{R}^N} |u(x)| \leq c_k\int_{\mathbb{R}^N} |\nabla^k (-\Delta)^{(N-k)/2} u(x)|\;dx \label{k_inequality_star}
\end{equation}
for all $u \in C^\infty_c(\mathbb{R}^N)$.  Furthermore, when $N-k$ is even $c_k$ is optimal in the sense that it cannot be replaced by any smaller constant. 
\end{theorem}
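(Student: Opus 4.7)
My plan is to derive \eqref{k_inequality_star} by pairing a test function against the PDE \eqref{log_PDE} at the distinguished exponent $\alpha=N$, and then to establish optimality when $N-k$ is even by constructing a near-extremizing sequence built from $\log|x|$.

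For the inequality, I specialize \eqref{log_PDE} (in its equivalent form \eqref{eq:37}) to $\alpha=N$, so that $|x|^{2\alpha-N}=|x|^N$ and $(-\Delta)^{(\alpha-k)/2}=(-\Delta)^{(N-k)/2}$, giving
\[(-1)^k\div_k(-\Delta)^{(N-k)/2}\bigl(|x|^N\,\nabla^k(-\Delta)^{(N-k)/2}\log|x|\bigr)=\mu_{k,N}\delta_0.\]
Since $(-\Delta)^{(N-k)/2}\log|x|$ is radial and homogeneous of degree $k-N$, it equals $D_{N,k}|x|^{k-N}$ for a constant $D_{N,k}$ read off from the Riesz potential formula (which is precisely where the $\gamma(N-k)$ in the statement enters). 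Applying \eqref{eq:5} with $s=k-N$ then yields the crucial pointwise constancy
\[|x|^N\,\bigl|\nabla^k(-\Delta)^{(N-k)/2}\log|x|\bigr|\;\equiv\;|D_{N,k}|\,(\lambda_N^{k-N,k})^{1/2},\qquad x\neq 0.\]

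Next I pair the distributional identity with an arbitrary $u\in C^\infty_c(\R^N)$. Integrating the $\div_k$ against $u$ by parts contributes a factor $(-1)^k$ that cancels the outer sign, and then moving the self-adjoint $(-\Delta)^{(N-k)/2}$ onto $u$ (where it commutes with $\nabla^k$) gives
\[\mu_{k,N}\,u(0)=\int_{\R^N}\nabla^k(-\Delta)^{(N-k)/2}u(x)\,\cdot\,|x|^N\nabla^k(-\Delta)^{(N-k)/2}\log|x|\,dx.\]
Taking absolute values inside the integral and factoring out the constant weight immediately produces \eqref{k_inequality_star} at $x=0$, and translation invariance yields it at every point. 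Matching $|\mu_{k,N}|$, $|D_{N,k}|$ and $\lambda_N^{k-N,k}$ identifies the resulting constant with the stated $c_k=(\lambda_N^{k-N,k})^{-1/2}\gamma(N-k)^{-1}$.

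For the optimality statement when $N-k$ is even, the operator $(-\Delta)^{(N-k)/2}$ is a genuine local differential operator, which permits me to test with compactly supported perturbations of $-\log|x|$. I take a standard radial cutoff $\chi_R$ that equals $1$ on $B_R$ and is supported in $B_{2R}$, together with a smooth radial regularization of $-\log|x|$ on $B_\varepsilon$, and call the resulting function $u_{\varepsilon,R}\in C^\infty_c(\R^N)$. On the annulus $\{\varepsilon<|x|<R\}$, $\nabla^k(-\Delta)^{(N-k)/2}u_{\varepsilon,R}=-\nabla^k(-\Delta)^{(N-k)/2}\log|x|$, so by the pointwise identity above this part of the $L^1$ norm equals $\omega_{N-1}|D_{N,k}|(\lambda_N^{k-N,k})^{1/2}\log(R/\varepsilon)$. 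Meanwhile $u_{\varepsilon,R}(0)\sim-\log\varepsilon$, and a direct scaling estimate shows that the contributions to $\int|\nabla^k(-\Delta)^{(N-k)/2}u_{\varepsilon,R}|$ from the smoothed core $B_\varepsilon$ and the cutoff collar $R<|x|<2R$ are $O(1)$ uniformly. Letting $\varepsilon\to 0$ (with $R$ fixed) forces the ratio $|u_{\varepsilon,R}(0)|/\int|\nabla^k(-\Delta)^{(N-k)/2}u_{\varepsilon,R}|$ to $c_k$, proving the constant cannot be improved.

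The main obstacle is the optimality construction: the transition-region contributions must be controlled to $O(1)$, and this is exactly where the hypothesis that $N-k$ is even matters, since it makes $(-\Delta)^{(N-k)/2}$ local and allows cleanly separating the inner-regularization, bulk, and outer-cutoff regions. Without that locality the nonlocal tail of the fractional Laplacian couples all three regions and the simple extremizing ansatz breaks down.
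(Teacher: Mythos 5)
Your proof is correct and follows essentially the same route as the paper. You pair the ``$\log$ form'' of the identity (\ref{eq:37}) at $\alpha=N$ against a test function $u$ (i.e., you integrate by parts directly), whereas the paper convolves the $|x|^{k-N}$ form of (\ref{eq:37}) with $(-\Delta)^{(N-k)/2}u$ and identifies the right-hand side via the Riesz potential formula; these are equivalent after inverting the outer fractional Laplacian, and both reduce the inequality to the finite-dimensional Cauchy--Schwarz bound using the pointwise constancy from (\ref{eq:5}). Your optimality construction (smoothed, cut off $-\log|x|$ and send $\varepsilon\to 0$) is the same family $\{u_\varepsilon\}$ as in Proposition~\ref{prop:prop}, with $R$ kept as a free parameter but playing no essential role, and you correctly pinpoint locality of $(-\Delta)^{(N-k)/2}$ as the reason the hypothesis that $N-k$ is even enters. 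One point you should make explicit for a complete write-up: the value of $\mu_{k,N}$ is not stated, and the final identification of the constant $c_k$ requires the computation $\mu_{k,N}=D_{N,k}\,\lambda_N^{k-N,k}\,\gamma(N-k)$ with $D_{N,k}=-\gamma(N-k)/\omega_{N-1}$ (coming from Lemma~\ref{lem:lap-log}), from which $|D_{N,k}|(\lambda_N^{k-N,k})^{1/2}/|\mu_{k,N}|=(\lambda_N^{k-N,k})^{-1/2}\gamma(N-k)^{-1}$; you assert the matching but do not display it.
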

\noindent
Here, $\gamma(\alpha)$ is a normalization constant associated with the Riesz potential $I_\alpha$ ($\alpha\in(0,N)$), for which we use the definition as given in  \cite{Stein:1970}*{p. 117}):
\begin{equation}
\label{eq:I-alpha}
I_\alpha g(x)  \equiv (I_\alpha\ast g)(x) := \frac{1}{\gamma (\alpha)} 
\int_{\mathbb{R}^N} \frac{g(y)}{|x-y|^{N-\alpha}}\;dy,
\end{equation}
where
\begin{equation}
\label{eq:gamma}
\gamma(\alpha) = 
\pi^{N/2}2^{\alpha} 
\Gamma(\tfrac{\alpha}{2})/\Gamma(\tfrac{N-\alpha}{2}).
\end{equation}
For later use we also define, this time for each $\alpha\in[0,\infty)$,
 \begin{equation*}
\widetilde{\gamma}(\alpha)=\begin{cases}
\alpha\gamma(\alpha) & \alpha>0\\
\SNO & \alpha=0
\end{cases}
.
\end{equation*}
Note that $\widetilde{\gamma}(\alpha)$ is continuous at $\alpha=0$. We were only able to prove optimality of the constant $c_k$ in \eqref{k_inequality_star} when $N-k$ is even. It is therefore natural to raise the following question:
\begin{openquestion}
Is the constant in \eqref{k_inequality_star} optimal when $N-k$ is odd?
\end{openquestion}

Next we move to the question of optimal constants in embedding of the space $W^{m,p}_0(\Omega)$ when $p=N/m>1$. The first result of this type is due to Moser~\cite{moser}, who for the case $m=1,p=N$ proved the following: 
   \begin{equation}\label{eq:moser}
   \int_{\Omega} \exp(\alpha_0(N)|u|^{p^\prime})\leq c|\Omega|,~\forall u\in W^{1,N}_0(\Omega)\text{ s.t. }\int_{\Omega}|\nabla u|^N\leq 1,
   \end{equation}
   with $\alpha_0(N)=N\SNO^{\frac{1}{N-1}}$, and $\alpha_0(N)$ cannot be replaced by any larger number.
   
 In \cite{Adams:1988} D.~Adams generalized Moser's result to all spaces $W_0^{m,p}(\Omega)=W_0^{m,N/m}(\Omega)$, where $m$ is any positive integer less than $N$, for which he proved that
   \begin{equation}\label{eq:adams}
   \int_{\Omega} \exp(\beta_0(N,m)|u|^{p^\prime})\leq c|\Omega|,~\forall u\in W^{m,p}_0(\Omega)\text{ s.t. }\int_{\Omega}|D^m u|^p\leq 1,
   \end{equation}
   with $\beta_0(N,m)$ given by 
   \begin{equation}
   \label{eq:beta}
   \beta_0(m,N)=\begin{cases} \frac{N}{\omega_{N-1}}\gamma(m)^{p^\prime} & m \text{ even }\\
  \frac{N}{\omega_{N-1}}{\widetilde{\gamma}(m-1)}^{p^\prime} & m \text{ odd }
   \end{cases}
   .
   \end{equation}
 Moreover,  $\beta_0(N,m)$ is optimal, in the sense that it cannot be replaced by any larger number. In the above,
  \begin{equation}\label{Adams_derivative}
  D^m u =\begin{cases}
  (-\Delta)^{m/2} u& m \text{ even}\\
  \nabla (-\Delta)^{(m-1)/2} u& m \text{ odd}
  \end{cases} 
  .
  \end{equation}
  The norm $\|D^mu\|_{L^p(\Omega)}$ used by Adams looks somewhat unnatural, as in particular it requires to distinguish between the cases $m$ is even and odd in \eqref{eq:beta}.
  We shall see below how to obtain the same result as Adams', but for the more traditional norm $\||\nabla^m u|\|_{L^p(\Omega)}$.
  
The difficult part of Adams' proof of \eqref{eq:adams} is the following sharp exponential estimate for the Riesz potential proven in \cite{Adams:1988}:
 
\begin{theorem}\label{Adams}[D. Adams]
For $1<p<+\infty$, there is a constant $A=A(p)$ such that for all $f \in L^p(\mathbb{R}^N)$ with support contained in $\Omega$, $|\Omega|<+\infty$, 
\begin{equation}
\label{eq:adam-potential-ineq}
\int_\Omega \exp\left (\frac{N}{\omega_{N-1}}\gamma(\alpha)^{p^\prime} \left| \frac{I_\alpha f(x)}{\|f\|_{L^p}}\right|^{p^\prime}\right)\;dx \leq A |\Omega|,
\end{equation}
where $\alpha = N/p$.  Furthermore, no number greater than $\frac{N}{\omega_{N-1}}\gamma(\alpha)^{p^\prime}$ can replace the coefficient without forcing $A$ to depend on $u$ as well as $p$.
\end{theorem}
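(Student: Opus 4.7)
The plan is to follow D.~Adams' classical strategy, whose three ingredients are a rearrangement-based pointwise bound, a reduction to one dimension via an exponential change of variable, and a sharp one-dimensional exponential inequality. Writing $I_\alpha f = k\ast f$ with $k(x)=\gamma(\alpha)^{-1}|x|^{\alpha-N}$, the radial-decreasing structure of $k$ gives $k^*(t)=\gamma(\alpha)^{-1}(\SNO/(Nt))^{(N-\alpha)/N}$, and O'Neil's lemma on rearrangements of convolutions yields
\[
(I_\alpha f)^{**}(t)\;\le\; t\,k^{**}(t)\,f^{**}(t) + \int_t^\infty k^*(s) f^*(s)\,ds.
\]
Using that $f$ is supported in $\Omega$ with $|\Omega|<\infty$, I would then pass to logarithmic coordinates $t=|\Omega|e^{-s}$ on $(0,\infty)$ and normalize by $\phi(s)=\|f\|_{L^p}^{-1}|\Omega|^{1/p}e^{-s/p}f^*(|\Omega|e^{-s})$, for which $\|\phi\|_{L^p(0,\infty)}\le 1$; the O'Neil bound then rewrites as
\[
\frac{N}{\SNO}\gamma(\alpha)^{p'}\left|\frac{I_\alpha f}{\|f\|_{L^p}}\right|^{p'}\le \Bigl(\int_0^\infty a(s,\sigma)\phi(\sigma)\,d\sigma\Bigr)^{p'}
\]
for an explicit non-negative kernel $a(s,\sigma)$, and the layer-cake formula turns the left-hand side of \eqref{eq:adam-potential-ineq} into $|\Omega|\int_0^\infty \exp(-s + F(s))\,ds$ with $F$ the right-hand side above.

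The heart of the matter is then the one-dimensional Adams--Moser lemma: if $a(s,\sigma)\ge 0$ satisfies $\sup_s\int_0^\infty a(s,\sigma)^p\,d\sigma \le 1$ together with $\int_s^\infty a(s,\sigma)^p\,d\sigma \to 0$ as $s\to\infty$ (which is precisely what the explicit form of $k^*$ together with the critical scaling $\alpha=N/p$ delivers), then
\[
\int_0^\infty \exp\!\Bigl(-s+\Bigl(\int_0^\infty a(s,\sigma)\phi(\sigma)\,d\sigma\Bigr)^{p'}\Bigr)\,ds \;\le\; C(p)
\]
whenever $\phi\ge 0$ and $\|\phi\|_{L^p}\le 1$. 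This is Adams' extension of Moser's Lemma 1: one partitions $(0,\infty)$ by the size of the truncated energy $E(s)=\int_0^s\phi^p\,d\sigma$, separates diagonal and off-diagonal contributions to $\int a\phi$, and balances the H\"older loss against the exponential gain coming from $1-E(s)$. The exact matching between the normalizations of $a$ and $k^*$ is precisely what forces the coefficient $\frac{N}{\SNO}\gamma(\alpha)^{p'}$ and no larger.

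For the sharpness I would test \eqref{eq:adam-potential-ineq} on a Moser sequence: radial $f_\varepsilon$ supported in a ball of radius $\varepsilon$ whose rearrangements, in the logarithmic coordinates above, approximate the extremizer $\phi(s)=p^{-1/p}\mathbf{1}_{(0,N\log(1/\varepsilon))}(s)$ of the one-dimensional inequality. A direct computation then shows $I_\alpha f_\varepsilon$ has a logarithmic profile with exactly the borderline coefficient near the origin, so replacing $\frac{N}{\SNO}\gamma(\alpha)^{p'}$ by any larger constant makes the integral in \eqref{eq:adam-potential-ineq} diverge as $\varepsilon\to 0$, forcing $A$ to depend on $f$ (and hence on $p$). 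I expect the decisive obstacle to be the one-dimensional lemma of step three: both the precise optimal constant and its sharpness are encoded in the delicate balancing that makes this inequality work exactly at the critical exponent, and unlike the reduction and the testing, its proof is not obtained by any soft manipulation of the Riesz kernel.
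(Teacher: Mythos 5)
The paper does not prove Theorem~\ref{Adams}; it cites it directly from Adams' 1988 paper \cite{Adams:1988} and uses it as a black box together with the pointwise potential bound \eqref{eq:pointw} (or, in the authors' variant, Corollary~\ref{endpoint_derivative}). So there is no ``paper's own proof'' here to compare against; the comparison is implicitly against Adams' original argument.

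Your outline does capture the three-step structure of Adams' proof faithfully: the O'Neil rearrangement inequality for the Riesz kernel, the exponential change of variables $t=|\Omega|e^{-s}$ reducing the problem to a one-dimensional inequality on $(0,\infty)$, and the Moser-sequence testing for sharpness are all correct in their broad strokes. The gap is in your statement of the one-dimensional lemma, which is where the entire content of the theorem lives. You assert the hypotheses $\sup_s\int_0^\infty a(s,\sigma)^p\,d\sigma\le 1$ and $\int_s^\infty a(s,\sigma)^p\,d\sigma\to 0$. Neither is what Adams needs. His Lemma~1 requires instead \emph{(i)} the pointwise bound $a(s,\sigma)\le 1$ for $0<\sigma<s$, and \emph{(ii)} $\sup_{s>0}\bigl(\int_s^\infty a(s,\sigma)^{p'}\,d\sigma\bigr)^{1/p'}=b<\infty$ with $p'=p/(p-1)$ the conjugate exponent. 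The $L^{p'}$ (not $L^p$) norm is what H\"older pairs against $\|\phi\|_{L^p}\le 1$; and the pointwise bound on the diagonal block $0<\sigma<s$ is essential and cannot be relaxed to any integral bound on $a$: if $a$ were allowed to spike on a small set near $\sigma=s$, one could choose $\phi$ concentrating there so that $\int a\phi$ is large even though $\|\phi\|_{L^p}\le 1$, and the conclusion $\int_{-\infty}^\infty e^{-F(s)}\,ds\le c_0$ would fail. Since all of the sharpness of the constant $\tfrac{N}{\omega_{N-1}}\gamma(\alpha)^{p'}$ is encoded in exactly this lemma (as you correctly anticipate in your last paragraph), the misstatement is a genuine gap, not a cosmetic one.
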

In order to deduce \eqref{eq:adams} from \rth{Adams} Adams used  standard potential representations of a function in terms of the differential object \eqref{Adams_derivative}.  Indeed, one has for any smooth function with compact support:
\begin{equation}
\label{eq:rep}
u(x)=\begin{cases}\frac{1}{\gamma(m)}\int_{\R^N} (-\Delta)^{m/2}u(y)|x-y|^{m-N}\,dy & m \text{ even}\\
\frac{1}{\widetilde{\gamma}(m-1)}\int_{\R^N} \nabla\left((-\Delta)^{(m-1)/2}u(y)\right)\cdot|x-y|^{m-1-N}(x-y)\,dy & m \text{ odd}
\end{cases}
.
\end{equation}
 Note that the special case $m=1$ in \eqref{eq:rep} is nothing but the well-known formula:
 \[ 
 u(x)=\frac{1}{\SNO}\int_{\R^N} \frac{\nabla u(y)\cdot(x-y)}{|x-y|^{N}}\,dy\,. 
  \] 
In order to justify \eqref{eq:rep} for every $m\ge2$, note that the first formula  just expresses the fact that $I_m$ and $(-\Delta)^{m/2}$ are the inverse of each other. To obtain the second formula (for $m\ge3$ odd) it suffices to apply the first formula for $m-1$ instead of $m$, use the identity 
\[ 
\div\left(|x-y|^{m-1-N}(x-y)\right)=(m-1)|x-y|^{m-1-N}\,,
\] 
and finally apply integration by parts.
From \eqref{eq:rep} one deduce easily the pointwise inequality 
\begin{equation}\label{eq:pointw}
|u(x)|\leq \begin{cases}I_m(|D^m u|)(x) & m \text{ even}\\
 \frac{\gamma(m)}{\widetilde{\gamma}(m-1)}I_m(|D^m u|)(x) & m \text{ odd}
\end{cases}
.
\end{equation}
Plugging \eqref{eq:pointw} in \eqref{eq:adam-potential-ineq} leads immediately to \eqref{eq:adams}. 
In our approach, we still rely on \rth{Adams}, but  instead of \eqref{eq:pointw} we use the inequality
\[
 |u(x)| \leq \frac{\gamma(m)}{\sqrt{\ell^m_N}\SNO} I_m(|\nabla^m u|)(x)\,,
 \] 
 which is established in Corollary~\ref{endpoint_derivative}. This inequality is a consequence of the new representations of the higher order gradient in terms of potentials that are sharp, which follow from the case $k=m=\alpha$ of our equations \eqref{log_PDE}. 
 \par The above considerations allow us to obtain the following variant  of Adams' estimate \eqref{eq:adams}:
\begin{theorem}\label{thm4}
Let $m \in \{1,\ldots,N-1\}$ and $p=N/m \in (1,N]$, and define 
\begin{align*}
\widetilde\beta_0(m,N):= N \omega_{N-1}^{m/(N-m)} (\ell^m_N)^\frac{N}{2(N-m)}.
\end{align*}
Then we have
\begin{align*}
\int_{\Omega} e^{\widetilde\beta_0(m,N) |u|^{p^\prime}}\;dx \leq A |\Omega|,
\end{align*}
for all  $u\in W^{m,p}_0(\Omega)$ such that $\||\nabla^m u| \|_{L^p(\Omega)}\leq 1$. Furthermore, no number greater than $\widetilde\beta_0(m,N)$ can replace the coefficient without forcing $A$ to depend on $u$ as well as $p$.
\end{theorem}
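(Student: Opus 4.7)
The plan is to deduce the upper bound directly from Corollary~\ref{endpoint_derivative} and Adams' sharp exponential estimate (Theorem~\ref{Adams}), and to prove optimality by a Moser--Adams style test-function argument anchored on the function $\log(1/|x|)$, whose $m$-th gradient is pinned down explicitly by \eqref{log_equality}.

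For the upper bound, given $u\in W^{m,p}_0(\Omega)$ with $\||\nabla^m u|\|_{L^p(\Omega)}\le 1$, I would set $f:=|\nabla^m u|\in L^p(\R^N)$ (extended by zero outside $\Omega$), so that $\|f\|_{L^p(\R^N)}\le 1$ and $\operatorname{supp} f\subset\overline\Omega$. The pointwise inequality of Corollary~\ref{endpoint_derivative},
\[
 |u(x)|\le \frac{\gamma(m)}{\sqrt{\ell^m_N}\,\omega_{N-1}}\,I_m f(x),
\]
combined with Theorem~\ref{Adams} applied to $f$ with $\alpha=m$, yields
\[
 \int_\Omega \exp\!\left(\frac{N}{\omega_{N-1}}\gamma(m)^{p^\prime}\!\left(\frac{\sqrt{\ell^m_N}\,\omega_{N-1}}{\gamma(m)}\right)^{p^\prime}|u(x)|^{p^\prime}\right)dx\le A|\Omega|.
\]
Using $p^\prime=N/(N-m)$ and $p^\prime-1=m/(N-m)$, the coefficient in the exponent collapses to $N\omega_{N-1}^{m/(N-m)}(\ell^m_N)^{N/(2(N-m))}=\widetilde\beta_0(m,N)$, finishing this half.

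For optimality I would exhibit a sequence $\{u_n\}\subset W^{m,p}_0(\Omega)$ with $\||\nabla^m u_n|\|_{L^p(\Omega)}\le 1$ for which $\int_\Omega e^{\beta|u_n|^{p^\prime}}\,dx\to\infty$ whenever $\beta>\widetilde\beta_0(m,N)$. The $u_n$ are built by truncating and cutting off $\log(1/|x|)$; by \eqref{log_equality},
\[
 \int_{1/n\le|x|\le 1}|\nabla^m \log|x||^p\,dx=(\ell^m_N)^{N/(2m)}\omega_{N-1}\log n,
\]
so rescaling by $\bigl((\ell^m_N)^{N/(2m)}\omega_{N-1}\log n\bigr)^{-1/p}$ produces, up to lower order contributions from the cap and the cut-off, a function $u_n$ with $\||\nabla^m u_n|\|_{L^p}\le 1+o(1)$ which is essentially constant on the ball $|x|\le 1/n$ with $|u_n|^{p^\prime}=N\log n/\widetilde\beta_0(m,N)+O(1)$ there. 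Since $|\{|x|\le 1/n\}|\sim n^{-N}$, this forces $\int_\Omega e^{\beta|u_n|^{p^\prime}}\,dx\gtrsim n^{N(\beta/\widetilde\beta_0(m,N)-1)}\to\infty$ as soon as $\beta>\widetilde\beta_0(m,N)$.

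The hard part is the construction of admissible test functions. We need compactly supported $u_n\in W^{m,p}_0(\Omega)$ that reproduce the logarithmic concentration at the origin while ensuring the global $L^p$ norm of $\nabla^m u_n$ stays $\le 1+o(1)$. For $m\ge 2$, this requires joining $\log(1/|x|)$ on the annulus $\{1/n\le|x|\le 1\}$ with a polynomial cap on $\{|x|\le 1/n\}$ of degree at least $m-1$ (so that derivatives up to order $m-1$ agree across $|x|=1/n$) together with a smooth cut-off to zero near $\partial\Omega$, chosen so that neither the cap nor the cut-off region contributes to the leading $\log n$ term of $\int|\nabla^m u_n|^p\,dx$. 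This higher-order Moser--Adams lemma follows the scheme of \cite{Adams:1988}; the only new feature is that the $L^p$-mass on the annulus now carries the explicit factor $(\ell^m_N)^{N/(2m)}\omega_{N-1}\log n$ supplied by \eqref{log_equality}, which is precisely what converts Adams' constant $\beta_0(m,N)$ into $\widetilde\beta_0(m,N)$.
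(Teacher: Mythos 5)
Your proposal is correct and follows essentially the same route as the paper: the upper bound is exactly the paper's argument (Corollary~\ref{endpoint_derivative} combined with Theorem~\ref{Adams}, with the same exponent bookkeeping giving $\widetilde\beta_0(m,N)$), and the optimality part is the same Moser--Adams concentration analysis anchored on $\log(1/|x|)$ via \eqref{log_equality}. The only difference is cosmetic: the test functions you flag as the ``hard part'' (log profile plus cap plus cut-off) are already supplied by Proposition~\ref{prop:prop} -- there with a smooth radial cutoff rather than a polynomial cap -- and the paper bounds the exponential integral from below on the annulus $B_{2\varepsilon}\setminus B_\varepsilon$, where $u_\varepsilon=\log(1/|x|)$ exactly, instead of on the small ball.
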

We draw the attention of the reader to the natural way in which our formula for $\widetilde\beta_0(m,N)$ generalizes Moser's formula for $\alpha_0(N)$ in \eqref{eq:moser}.
\begin{remark}
The only case where Adams' constant coincides with ours is when $N=2m$.  This is because of the equality
\begin{equation}
\label{eq:eqnorms}
\|D^m u\|_{L^2(\Omega)} = \||\nabla^m u|\|_{L^2(\Omega)}.
\end{equation}
The equality $\beta_0(m,2m)=\widetilde{\beta}_0(m,2m)$ leads, after some simple manipulations, to the formula
\begin{equation}
\label{eq:ell-m-2m}
\ell_{2m}^m = 2^{2(m-1)} ((m-1)!)^2.
\end{equation}
An elementary direct way to deduce \eqref{eq:ell-m-2m} from \eqref{eq:eqnorms} is to apply the latter to the family $\{u_\varepsilon\}$ constructed in \rprop{prop:prop} (functions that approximate $\log(1/|x|)$). A simple computation gives 
\begin{align*}
\int_{\R^N}  |\nabla^m u_\varepsilon|^2=\SNO\ell_{2m}^m\log(1/\varepsilon)+O(1),\\
\intertext{ while }
\int_{\R^N}  |D^m u_\varepsilon|^2=\SNO 2^{2(m-1)} ((m-1)!)^2\log(1/\varepsilon)+O(1).
\end{align*}
Equating the above expressions and sending $\varepsilon$ to zero yields \eqref{eq:ell-m-2m}.
\end{remark}

More generally, the potential representations we develop in this paper enables us to give the best constants in the intermediate family, which is given in our
\begin{theorem}\label{thm5}
Let $m \in \mathbb{N}$, $k \in \{1,\ldots, m-2\}$, $m-k$ even, and suppose $p=N/m \in (1,\infty)$.  Further define 
\begin{align*}
\widetilde\beta_0(m,k,N):=\frac{N}{\omega_{N-1}} (\gamma(m-k)\sqrt{\lambda_N^{k-m,k}})^{p^\prime}.
\end{align*}
Then we have
\begin{align*}
\int_{\Omega} e^{\widetilde\beta_0(m,k,N) |u|^{p^\prime}}\;dx \leq A |\Omega|,
\end{align*}
for all $u\in C^\infty_c(\Omega)$ such that $\||\nabla^k (-\Delta)^{(m-k)/2} u| \|_{L^p}\leq 1$.  Furthermore, no number greater than $\widetilde\beta_0(m,k,N)$ can replace the coefficient without forcing $A$ to depend on $u$ as well as $p$.
\end{theorem}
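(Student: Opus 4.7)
The plan is to mimic the two-step argument that establishes \rth{thm4}: combine a sharp pointwise bound of the form $|u|\leq C\,I_m(|\nabla^k(-\Delta)^{(m-k)/2}u|)$ with Adams' exponential estimate \rth{Adams}. Specifically, I aim to show
\[|u(x)|\leq\frac{\gamma(m)}{\gamma(m-k)\sqrt{\lambda_N^{k-m,k}}}\,I_m\bigl(|\nabla^k(-\Delta)^{(m-k)/2}u|\bigr)(x)\qquad\forall u\in C^\infty_c(\R^N),\]
whose constant is engineered so that feeding it into \rth{Adams} with $\alpha=m$, $p=N/m$ and $f=|\nabla^k(-\Delta)^{(m-k)/2}u|$ (of unit $L^p$ norm) produces precisely $\widetilde\beta_0(m,k,N)$ as the coefficient of $|u|^{p^\prime}$ in the exponent.

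To derive this pointwise inequality, I would specialise the identity \eqref{log_PDE} to $\alpha=m$; this is admissible because the hypothesis that $m-k$ is even guarantees $(-\Delta)^{(m-k)/2}$ is a genuine integer power of $-\Delta$. Pairing the distributional identity against the test function $y\mapsto u(x_0-y)$ and performing $k$ integrations by parts to transfer $\div_k$, followed by $(m-k)/2$ further integrations by parts to transfer the Laplacian, yields
\[\mu_{k,m}\,u(x_0)=\int_{\R^N}\nabla^k(-\Delta)^{(m-k)/2}u(x_0-y)\cdot\bigl(|y|^{2m-N}\nabla^k(-\Delta)^{(m-k)/2}\log|y|\bigr)\,dy.\]
By scaling, $(-\Delta)^{(m-k)/2}\log|y|=a_{m,k,N}|y|^{k-m}$ for an explicit constant $a_{m,k,N}$ (e.g.\ $a_{m,k,N}=2-N$ when $m-k=2$), and \eqref{eq:5} then gives $|\nabla^k(-\Delta)^{(m-k)/2}\log|y||=|a_{m,k,N}|\sqrt{\lambda_N^{k-m,k}}/|y|^m$. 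A pointwise tensorial Cauchy--Schwarz, combined with $|y|^{2m-N}/|y|^m=|y|^{m-N}$, bounds the integral by $|a_{m,k,N}|\sqrt{\lambda_N^{k-m,k}}\,\gamma(m)\,I_m(|\nabla^k(-\Delta)^{(m-k)/2}u|)(x_0)$. Matching constants---which reduces to verifying $\mu_{k,m}=|a_{m,k,N}|\gamma(m-k)\lambda_N^{k-m,k}$ directly from \eqref{log_PDE}---yields the desired pointwise estimate.

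For the optimality assertion, I would adapt the family $\{u_\varepsilon\}$ from \rprop{prop:prop} that smooths $\log(1/|x|)$. Using \eqref{eq:5} one obtains
\[\int_{\R^N}|\nabla^k(-\Delta)^{(m-k)/2}u_\varepsilon|^{p}\,dx=\omega_{N-1}\bigl(a_{m,k,N}^2\lambda_N^{k-m,k}\bigr)^{p/2}\log(1/\varepsilon)+O(1),\]
while $u_\varepsilon(0)=\log(1/\varepsilon)+O(1)$. Normalising $u_\varepsilon$ by its semi-norm and inserting into the exponential integral over a small ball, one finds that any $\widetilde\beta>\widetilde\beta_0(m,k,N)$ would force $\int_\Omega e^{\widetilde\beta|u_\varepsilon|^{p^\prime}}\,dx\to\infty$ as $\varepsilon\to 0$, contradicting a uniform bound of the form $A|\Omega|$.

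I expect the main obstacle to be the constant-bookkeeping in the first step: one must identify $\mu_{k,m}$ at $\alpha=m$ directly from \eqref{log_PDE} and justify the integrations by parts rigorously despite $\log|y|$ being only locally integrable. Since the authors have already developed the corresponding distributional machinery for \rth{thm2} and \rth{thm4}, this should reduce to careful accounting of the constants already catalogued in the paper.
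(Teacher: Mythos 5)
Your proposal is correct and follows essentially the same route as the paper: the pointwise bound $|u|\leq \frac{\gamma(m)}{\gamma(m-k)\sqrt{\lambda_N^{k-m,k}}}I_m(|\nabla^k(-\Delta)^{(m-k)/2}u|)$ you derive from \eqref{log_PDE} at $\alpha=m$ is exactly Corollary~\ref{intermediate_derivatives}, which the paper obtains from the equivalent identity \eqref{eq:37} combined with $u=I_{m-k}(-\Delta)^{(m-k)/2}u$, and the reduction to Theorem~\ref{Adams} plus the optimality argument via the family $\{u_\varepsilon\}$ of Proposition~\ref{prop:prop} are the same as in the paper's proofs of Theorems~\ref{thm4} and~\ref{thm5}. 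Your constant bookkeeping checks out (indeed $|a_{m,k,N}|=\gamma(m-k)/\omega_{N-1}$ by Lemma~\ref{lem:lap-log}, and the factors of $|a_{m,k,N}|$ cancel), so the only refinement the paper offers is to sidestep the integration-by-parts issues with $\log|y|$ by working directly with $|y|^{k-m}$ as in Theorem~\ref{thm0}.
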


Finally, we return to the exceptional case $X^{N,0}(\R^N)$. Here the embedding into $L^\infty$ is false, though it is well known that one has an embedding into the larger space of functions of bounded mean oscillation (BMO).  In fact, in this regime John and Nirenberg's work implies the existence of a $C>0$ for which
\begin{align}
|u|_{BMO} \leq C \int_{\mathbb{R}^N} |(-\Delta)^{N/2} u(x)|\;dx \label{k_equals_0}
\end{align}
for all $(-\Delta)^{N/2}u \in L^1(\mathbb{R}^N)$.  Indeed, they show that functions which can be expressed as
\begin{align}
u(x) =  \frac{2}{\pi^{N/2}2^{N}  \Gamma(\tfrac{N}{2})} \int_{\mathbb{R}^N} \log |x-y|^{-1} f(y) \;dy, \label{JN}
\end{align}
for some $f \in L^1(\mathbb{R}^N)$ are of bounded mean oscillation (see p.~ 417 in \cite{JohnNirenberg}) with a norm depending on the norm of $f \in L^1(\mathbb{R}^N)$.  The inequality \eqref{k_equals_0} follows when one takes into account that for such $u$ one has
\begin{align*}
(-\Delta)^{N/2} u = f,
\end{align*}
(see, for example, Corollary~\ref{corr:delta} below).

We can give an alternative proof of this embedding with the techniques developed here, provided one uses the appropriate semi-norm on $BMO$.  In particular, let us here take the natural norm on $BMO$ arising as the dual of a Banach space.  Thus, we take for granted C. Fefferman's result that this space is the dual of the Hardy space $\mathcal{H}^1(\mathbb{R}^N)$ (see \cites{Fefferman, FeffermanStein})
\begin{align*}
\left(\mathcal{H}^1(\mathbb{R}^N)\right)^\prime = BMO(\mathbb{R}^N),
\end{align*}
where we equip the Hardy space $\mathcal{H}^1(\mathbb{R}^N)$ with the norm of Stein and Weiss \cite{SteinWeiss}
\begin{align*}
\|f\|_{\mathcal{H}^1(\mathbb{R}^N)} := \int_{\mathbb{R}^N} \left|\left(f(x),Rf(x)\right)\right| \;dx,
\end{align*}
where $Rf=\nabla(I_1f)$ is the Riesz transform of $f$.  Then we consider the semi-norm of an element of $u \in BMO(\mathbb{R}^N)$ as
\begin{align*}
|u|_{BMO} &= \sup_{f \in \mathcal{H}^1(\mathbb{R}^N), \|f\| \leq 1} \int_{\mathbb{R}^N} uf. 
\end{align*}

For this semi-norm, we prove
\begin{proposition}\label{thm3}
Define $c_0:=c_1$ as in Theorem \ref{thm2} in the case $k=1$.  Then one has
\begin{align}
|u|_{BMO} \leq c_0 \int_{\mathbb{R}^N} |(-\Delta)^{N/2} u(x)|\;dx \label{k_0}\,,
\end{align}
for all $u$ such that $(-\Delta)^{N/2}u \in M_b(\mathbb{R}^N)$, where $M_b(\mathbb{R}^N)$ the space of finite Radon measures.
\end{proposition}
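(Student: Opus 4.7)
The strategy is to dualize the $k=1$ case of \rth{thm2} via Fefferman's $\mathcal{H}^1$--$BMO$ duality. By the definition of the $BMO$ semi-norm, it suffices to establish
\[ \left|\int_{\mathbb{R}^N} u f\,dx\right| \leq c_0 \int_{\mathbb{R}^N} |(-\Delta)^{N/2}u| \]
for every $f \in \mathcal{H}^1(\mathbb{R}^N)$ with $\|f\|_{\mathcal{H}^1} \leq 1$. Since elements of $\mathcal{H}^1$ have vanishing integral, I would reduce by density to $f \in C^\infty_c$ with $\int f = 0$, and approximate $g := (-\Delta)^{N/2}u \in M_b(\mathbb{R}^N)$ in total variation by smooth compactly supported densities.

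Given such $f$, introduce the test function
\[ w(x) := \frac{1}{\kappa} \int_{\mathbb{R}^N} \log \frac{1}{|x-y|}\, f(y)\,dy, \]
where $\kappa > 0$ is chosen so that $E(x) := \log|x|^{-1}/\kappa$ is a fundamental solution of $(-\Delta)^{N/2}$ (the case $k=0$, $\alpha = N/2$ of \eqref{log_PDE}; cf.~\rcor{corr:delta}). Using $\int f = 0$ and Taylor expansion of the log kernel at infinity, $w \in C^\infty \cap L^\infty$ with $w(x) = O(|x|^{-1})$. Moreover $(-\Delta)^{N/2} w = f$ by construction, whence
\[ \nabla(-\Delta)^{(N-1)/2} w = \nabla I_1 f = R f, \]
with $R$ the Riesz transform. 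The pointwise bound $|Rf(x)| \leq |(f(x), Rf(x))|$ gives $\|Rf\|_{L^1} \leq \|f\|_{\mathcal{H}^1}\leq 1$, and the key estimate is
\[ \|w\|_{L^\infty} \leq c_1 \|\nabla(-\Delta)^{(N-1)/2}w\|_{L^1} = c_1 \|Rf\|_{L^1} \leq c_1 = c_0, \]
which is nothing other than the $k=1$ case of \rth{thm2} applied to $w$.

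To conclude, since $u = E \ast g + \text{const}$ and $\int f = 0$, Fubini's theorem---valid because $g$ is compactly supported and $w$ is bounded and continuous---yields
\[ \int_{\mathbb{R}^N} u f\,dx = \int_{\mathbb{R}^N} w\,dg, \]
so $\left|\int uf\right| \leq \|w\|_{L^\infty}\|g\|_{M_b}\leq c_0 \|g\|_{M_b}$, and taking the supremum over admissible $f$ gives \eqref{k_0}. The main obstacle is rigorously justifying the $L^\infty$ bound on $w$ in the previous step, since $w$ is not compactly supported. I expect this to be handled by verifying that the representation formula underlying the proof of \rth{thm2}---which expresses $w(x_0)$ as the integral of $\nabla(-\Delta)^{(N-1)/2}w$ against a vector field of uniform magnitude coming from $|x|^N\nabla(-\Delta)^{(N-1)/2}\log|x|$, cf.~\eqref{log_PDE} with $k=1$, $\alpha=N$---remains valid for any smooth $w$ with $\nabla(-\Delta)^{(N-1)/2}w\in L^1$ and $w(x)\to 0$ at infinity, both of which have been verified.
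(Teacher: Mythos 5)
Your argument is correct in outline and lands on the right constant, but it is organized differently from the paper's proof. The paper never introduces an auxiliary potential: it writes $\int uf=\int \bigl(I_{N-1}(-\Delta)^{N/2}u\bigr)\,\bigl(I_1f\bigr)$, expands the kernel $|x-y|^{-1}=\tfrac{1}{N-1}\div_x\bigl(\tfrac{x-y}{|x-y|}\bigr)$, integrates by parts to produce $Rf=\nabla I_1f$, and concludes by Cauchy--Schwarz against the unit vector field, giving $\bigl|\int uf\bigr|\le \tfrac{1}{(N-1)\gamma(N-1)}\|(-\Delta)^{N/2}u\|_{L^1}\|Rf\|_{L^1}$ directly. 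You instead dualize: you solve $(-\Delta)^{N/2}w=f$ by the logarithmic potential, apply the $k=1$ case of Theorem~\ref{thm2} to $w$ to get $\|w\|_{L^\infty}\le c_1\|Rf\|_{L^1}$, and transfer back to $u$ via Fubini, $\int uf=\int w\,dg$. What your route buys is conceptual economy (the Proposition becomes literally an application of Theorem~\ref{thm2} to the adjoint problem); what the paper's route buys is that it never needs the $L^\infty$ inequality outside $C^\infty_c(\R^N)$, because the measure $(-\Delta)^{N/2}u$ stays inside the pairing.

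That last point is exactly where the weight of your argument sits, and you correctly flag it: Theorem~\ref{thm2} as stated applies only to $u\in C^\infty_c(\R^N)$, while your $w=E*f$ is merely smooth, bounded, and $O(|x|^{-1})$ at infinity, so the inequality $\|w\|_{L^\infty}\le c_1\|\nabla(-\Delta)^{(N-1)/2}w\|_{L^1}$ must be re-derived for this class rather than quoted. The extension does go through in the form you sketch: writing $w=I_{N-1}(I_1f)$ with $\int f=0$, one has $I_1f(x)=O(|x|^{-N})$ and $\nabla I_1f=Rf\in L^1$, so the identity $|x-z|^{-1}=\tfrac{1}{N-1}\div_z\bigl(\tfrac{z-x}{|z-x|}\bigr)$ plus integration by parts (boundary terms vanish by the stated decay) yields $w(x)=\tfrac{1}{\widetilde{\gamma}(N-1)}\int \tfrac{x-z}{|x-z|}\cdot Rf(z)\,dz$ and hence the bound with constant $c_1=\tfrac{1}{(N-1)\gamma(N-1)}$. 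Note, however, that carrying this out is precisely the computation the paper performs inside $\int uf$; so once the gap is closed honestly, the two proofs share the same kernel identity, and yours carries the extra (routine but necessary) limiting/approximation bookkeeping for $f$ and for $g=(-\Delta)^{N/2}u\in M_b$, comparable to the paper's own reduction to $u\in C^\infty_c$.
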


In this endpoint, we are not able to prove optimality, and so this prompts one to ask
\begin{openquestion}
Is the constant in \eqref{k_0} optimal?
\end{openquestion}

The plan of the paper is as follows.  In Section \ref{preliminaries} we collect some useful results regarding the function $\log|x|$, the Riesz potentials, and the fractional Laplacian.  In Section \ref{PDEs} we state and prove precise versions of the equation \eqref{log_PDE}.  In Section \ref{proofs} we prove our main embedding results, with best constants, whenever possible.

\section{Preliminaries}\label{preliminaries}

We here recall some facts which will be useful in the sequel.  For $\alpha \in (0,N)$,  the Riesz potential as defined in \eqref{eq:I-alpha}
satisfies the semigroup property 
\begin{align*}
  I_{\alpha+\beta}g&=I_\alpha I_\beta g, &\text { 
for } \alpha,\beta>0, \text{ such that  } \alpha+\beta<N, 
\end{align*}
for $g$ in a suitable class of functions.   Note that this implies (or can be deduced from) the Fourier space relation
\begin{align*}
(I_\alpha g)\widehat{\phantom{x}} (\xi) = (2\pi |\xi|)^{-\alpha} \widehat{g}(\xi),
\end{align*}
again for $g$ suitably regular and integrable, and where we take the convention that
\begin{align*}
 \widehat{g}(\xi) = \int_{\mathbb{R}^N} g(x) e^{-2\pi i x\cdot \xi}\;dx.
\end{align*}

 For such functions, we can define the inverse of $I_\alpha$ by
\begin{align*}
((-\Delta)^{\alpha/2}g)\widehat{\phantom{x}}(\xi) := (2\pi |\xi|)^\alpha \widehat{g}(\xi).
\end{align*}
This is the fractional Laplacian, and in particular, when $0<\alpha<2$, one can deduce from these definitions  the relation
\begin{align*}
(-\Delta)^{\alpha/2}g = I_{2-\alpha} (-\Delta) g.
\end{align*}

A limiting case of the Riesz potentials $I_\alpha$ is the case $\alpha \to N^-$, the limit interpreted in a suitable sense.  In fact, for $f \in \mathcal{S}(\mathbb{R}^N)$ with $\int_{\R^N}  f=0$ one has
\begin{align*}
\lim_{\alpha \to N^-} I_\alpha f &= \lim_{\alpha \to N^-} \int_{\mathbb{R}^N} \frac{1}{\gamma(\alpha)} \frac{1}{|x-y|^{N-\alpha}} f(y)\;dy\\
&= \lim_{\alpha \to N^-} \int_{\mathbb{R}^N} \frac{1}{\gamma(\alpha)} \left[\frac{1}{|x-y|^{N-\alpha}}- \frac{1}{|x|^{N-\alpha}}\right]f(y)\;dy,
\end{align*}
while
\begin{align*}
\frac{1}{|x-y|^{N-\alpha}}- \frac{1}{|x|^{N-\alpha}} &= e^{\ln |x-y|^{\alpha-N}}- e^{\ln |x|^{\alpha-N}}\\
&= (\alpha-N)\left( \ln |x-y| - \ln |x|\right)+ o(\alpha-N).
\end{align*}
Thus applying Lebesgue's dominated convergence theorem one obtains
\begin{align*}
\lim_{\alpha \to N^-} I_\alpha f = \lim_{\alpha \to N^-} \frac{(\alpha-N)}{ \gamma(\alpha)} \int_{\mathbb{R}^N} \ln |x-y| f(y)\;dy.
\end{align*}
Finally, we can resolve the constant
\begin{align*}
\lim_{\alpha \to N^-} \frac{(\alpha-N)}{ \gamma(\alpha)} &= \lim_{\alpha \to N^-} (\alpha-N) \frac{\Gamma(\tfrac{N-\alpha}{2})}{\pi^{N/2}2^{\alpha} 
\Gamma(\tfrac{\alpha}{2})} \\
&= \frac{-2}{\pi^{N/2}2^{N}  \Gamma(\tfrac{N}{2})}  \lim_{\alpha \to N^-} \tfrac{(N-\alpha)}{2} \Gamma(\tfrac{N-\alpha}{2}),
\end{align*}
and thus we find
\begin{align*}
\lim_{\alpha \to N^-} I_\alpha f =   \frac{2}{\pi^{N/2}2^{N}  \Gamma(\tfrac{N}{2})} \int_{\mathbb{R}^N} \ln |x-y|^{-1} f(y)\;dy,
\end{align*}
so that in a suitable sense
\begin{align*}
(I_Ng)\widehat{\phantom{x}} (\xi) = (2\pi |\xi|)^{-N} \widehat{g}(\xi).
\end{align*}
\begin{corollary}
	\label{corr:delta}
	One has, in the sense of distributions,
	\begin{align*}
	(-\Delta)^{N/2}  \frac{2}{\pi^{N/2}2^{N}  \Gamma(\tfrac{N}{2})}  \log |x|^{-1}=\delta_0.
	\end{align*}
\end{corollary}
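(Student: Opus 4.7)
The plan is to pass to the limit $\alpha \to N^-$ in the Fourier-side identity $(-\Delta)^{\alpha/2}I_\alpha=\mathrm{id}$ and compare the result with the explicit limiting formula for $I_\alpha$ computed in the display immediately above the corollary. Set
\[
u(x):=\frac{2}{\pi^{N/2}2^{N}\Gamma(N/2)}\log|x|^{-1}.
\]
To prove $(-\Delta)^{N/2}u=\delta_0$ in $\mathcal{D}'(\R^N)$, I need to show that $\int_{\R^N}u(x)(-\Delta)^{N/2}\varphi(x)\,dx=\varphi(0)$ for every $\varphi\in C^\infty_c(\R^N)$.

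First I fix such a $\varphi$ and set $f:=(-\Delta)^{N/2}\varphi$. Since $\hat f(\xi)=(2\pi|\xi|)^N\hat\varphi(\xi)$ vanishes at the origin, $\int_{\R^N}f=\hat f(0)=0$. Moreover, smoothness of $\hat\varphi$ together with the vanishing of $(2\pi|\xi|)^N\hat\varphi(\xi)$ at $0$ provides enough integrability and decay on $f$ to apply the limiting argument of the preceding display.

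Next, for $\alpha\in(0,N)$ the Fourier identity $(I_\alpha f)^{\wedge}(\xi)=(2\pi|\xi|)^{-\alpha}\hat f(\xi)=(2\pi|\xi|)^{N-\alpha}\hat\varphi(\xi)$ shows that $I_\alpha f=(-\Delta)^{(N-\alpha)/2}\varphi$, and therefore $\lim_{\alpha\to N^-}I_\alpha f(x)=\varphi(x)$ pointwise (via inverse Fourier transform and dominated convergence on $\hat\varphi$). On the other hand, the computation just before the corollary gives
\[
\lim_{\alpha\to N^-}I_\alpha f(x)=\frac{2}{\pi^{N/2}2^N\Gamma(N/2)}\int_{\R^N}\log|x-y|^{-1}f(y)\,dy=(u\ast f)(x).
\]
Equating the two limits yields $\varphi=u\ast f$ pointwise. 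Evaluating at $x=0$ and using that $u$ is even gives
\[
\varphi(0)=(u\ast f)(0)=\int_{\R^N} u(y)(-\Delta)^{N/2}\varphi(y)\,dy,
\]
which is exactly the distributional identity $(-\Delta)^{N/2}u=\delta_0$.

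The one technical point requiring care, and which I expect to be the main obstacle, is the justification of the limit calculation applied to $f=(-\Delta)^{N/2}\varphi$ when $N$ is odd: in that case $(-\Delta)^{N/2}$ is nonlocal, so $f$ is no longer compactly supported, and one must verify that $f$ has enough decay for the dominated convergence argument and for the convolution $u\ast f$ to make sense. This follows from the fact that $\hat f$ is continuous, vanishes at $0$, and is Schwartz away from the origin, which translates to integrability and polynomial decay of $f$; one can alternatively approximate $\varphi$ in $\mathcal{S}(\R^N)$ by functions whose Fourier transforms vanish in a neighborhood of the origin, in which case $f\in\mathcal{S}(\R^N)$ and the preceding argument applies directly, then pass to the limit.
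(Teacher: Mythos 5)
Your argument is correct and follows essentially the same route as the paper: the corollary is stated there as a direct consequence of the preceding computation of $\lim_{\alpha\to N^-}I_\alpha f$, and you simply make that deduction precise by testing against $\varphi\in C^\infty_c(\R^N)$, taking $f=(-\Delta)^{N/2}\varphi$, and comparing the two limits of $I_\alpha f$. Your explicit attention to the case of odd $N$ (where $f$ is no longer compactly supported but still has the decay needed for the dominated convergence and for the pairing with $\log|x|$) is a point the paper leaves implicit, and your proposed fix is sound.
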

For $m\in(0,N)$ we compute $(-\Delta)^{m/2}\log |x|$ in the following
\begin{lemma}
  \label{lem:lap-log}
 For any $m \in (0,N)$ we have in the sense of distributions
 \begin{equation}
   \label{eq:1}
   (-\Delta)^{m/2}\log |x|=-\left(\frac{\gamma(m)}{\omega_{N-1}}\right)\frac{1}{|x|^m}\,.
 \end{equation}
\end{lemma}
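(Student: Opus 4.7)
The plan is to identify $T:=(-\Delta)^{m/2}\log|x|$ as a tempered distribution that is forced, by symmetry alone, to be proportional to $|x|^{-m}$, and then pin down the constant using Corollary~\ref{corr:delta} together with the semigroup property of the fractional Laplacian on the Fourier side.

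First I would check that $T$ is a rotation-invariant tempered distribution homogeneous of degree $-m$. Rotation invariance follows immediately from the radiality of $\log|x|$ together with the Fourier-side definition of $(-\Delta)^{m/2}$. For the homogeneity, let $D_\lambda f(x):=f(\lambda x)$; the Fourier definition gives $(-\Delta)^{m/2}(D_\lambda f)=\lambda^m D_\lambda (-\Delta)^{m/2}f$, and applied to $\log|x|$, the identity $D_\lambda \log|x|=\log|x|+\log\lambda$ combined with the fact that $(-\Delta)^{m/2}$ annihilates constants gives $\lambda^m D_\lambda T=T$, i.e., $T$ is homogeneous of degree $-m$.

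Since $-m\in(-N,0)$, the function $|x|^{-m}$ is locally integrable and defines a tempered distribution with the same homogeneity and rotational symmetry. On $\R^N\setminus\{0\}$ these constraints force $T=c\,|x|^{-m}$ for some unique $c\in\R$, and then $T-c|x|^{-m}$ is a tempered distribution supported at the origin, hence a finite sum of derivatives of $\delta_0$. Each derivative $\partial^\alpha \delta_0$ is homogeneous of degree $-N-|\alpha|<-m$, so only the zero distribution can contribute, giving $T=c|x|^{-m}$ globally.

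To compute $c$ I would apply $(-\Delta)^{(N-m)/2}$ to both sides. By the multiplier identity $(-\Delta)^{(N-m)/2}\circ(-\Delta)^{m/2}=(-\Delta)^{N/2}$ and Corollary~\ref{corr:delta}, the left side equals $-c_N^{-1}\delta_0$, where $c_N:=2/(\pi^{N/2}2^N\Gamma(N/2))$. For the right side, the Fourier identity $\widehat{|x|^{-m}}=\gamma(N-m)(2\pi|\xi|)^{m-N}$ (read off from $I_{N-m}\delta_0=|x|^{-m}/\gamma(N-m)$) gives $(-\Delta)^{(N-m)/2}|x|^{-m}=\gamma(N-m)\delta_0$. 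Equating, $c=-1/(c_N\gamma(N-m))$, and a short check using the reflection identity $\gamma(m)\gamma(N-m)=\pi^N 2^N$ (immediate from the $\Gamma$-factor cancellation in the definition of $\gamma$) together with $\omega_{N-1}=2\pi^{N/2}/\Gamma(N/2)$ yields $c=-\gamma(m)/\omega_{N-1}$. The main obstacle is the distributional homogeneity/classification step, where one must carefully exclude $\delta_0$-supported corrections; this argument works precisely because $m\in(0,N)$, so that both $|x|^{-m}$ is locally integrable and no $\partial^\alpha\delta_0$ has matching homogeneity.
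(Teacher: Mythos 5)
Your argument is correct, and the constant checks out: $\gamma(m)\gamma(N-m)=\pi^N2^N$ indeed turns $-1/(c_N\gamma(N-m))$ into $-\gamma(m)/\omega_{N-1}$. But you take a genuinely different route from the paper. The paper's proof of this lemma is a direct computation along the chain of Riesz potentials: it starts from the classical identities $-\Delta\log|x|=(2-N)|x|^{-2}$ for $N\ge3$ (respectively $-\Delta\log|x|=-2\pi\delta_0$ for $N=2$), rewrites the right-hand side as a multiple of $I_{N-2}\delta_0$, and then uses the semigroup property $I_\alpha I_\beta=I_{\alpha+\beta}$ to land directly on a multiple of $I_{N-m}\delta_0=|x|^{-m}/\gamma(N-m)$, after which the coefficient is simplified; this forces a case split between $N\ge3$ and $N=2$. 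You instead use the structural strategy the paper reserves for Theorems~\ref{thm1} and~\ref{thm0}: rotation invariance plus homogeneity of degree $-m$ pin down $T$ up to a constant on $\R^N\setminus\{0\}$, Schwartz's theorem plus a homogeneity count (each $\partial^\alpha\delta_0$ has degree $-N-|\alpha|<-m$) kills any correction at the origin, and the constant is then fixed by composing with $(-\Delta)^{(N-m)/2}$ and invoking Corollary~\ref{corr:delta}. Your route avoids the dimensional case split and the explicit potential chain, at the price of the classification of homogeneous rotation-invariant distributions on the punctured space and of leaning on the endpoint normalization of Corollary~\ref{corr:delta}, which the paper itself obtains only by a limiting argument; the paper's computation is more pedestrian but self-contained at that point of the text. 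Both arguments share the same (unaddressed, also in the paper) informality about what exactly $(-\Delta)^{m/2}$ means when applied to $\log|x|$, so I do not count that against you.
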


\begin{proof}
We treat separately the cases $N\ge 3$ and $N=2$. For $N\ge3$ we recall that
  \begin{equation}
    \label{eq:4}
    -\Delta\log |x|=(2-N)/|x|^2 = (2-N) \gamma(N-2) (I_{N-2}\delta_0)(x) \,.
  \end{equation}
Using the representation of
  $(-\Delta)^{-1/2}$ via the Riesz potential we get the following
  formula for $(-\Delta)^{1/2}\log |x|$:
  \begin{multline}
    \label{eq:3}
    (-\Delta)^{1/2}\log
    |x|= I_1 (-\Delta) \log |x| = I_1 \left((2-N)\gamma(N-2) I_{N-2}\delta_0\right) (x)\\= (2-N) \gamma(N-2) (I_{N-1}\delta_0)(x).
  \end{multline}
 The semi-group property of the Riesz potential and fractional Laplacian now imply that 
 \begin{align}
 \label{eq:999}
    (-\Delta)^{m/2}\log |x| = (2-N)\gamma(N-2) (I_{N-m}\delta_0)(x)= \frac{(2-N) \gamma(N-2)}{\gamma(N-m)} \frac{1}{|x|^m}\,.
 \end{align}
Next we use the well-known formula
 \begin{equation*}
 \omega_{N-1}=\frac{2\pi^{N/2}}{\Gamma(N/2)}=\frac{\gamma(2)}{N-2}~(\text{see
 	\eqref{eq:gamma}}),
 \end{equation*}
 in conjunction with the semigroup property of $\gamma$ to
 rewrite the coefficient on the R.H.S.~of \eqref{eq:999} as
 \begin{equation}
 \label{eq:35}
 \frac{(2-N) \gamma(N-2)}{\gamma(N-m)}=-\frac{\gamma(2)\gamma(N-2)}{\omega_{N-1}\gamma(N-m)}
 =-\frac{\gamma(m)}{\omega_{N-1}}\,,
 \end{equation}
 and \eqref{eq:1} follows, when $N\ge3$.
 \par When $N=2$ we start with 
 \begin{equation*}
 -\Delta\log |x|=-2\pi\delta_0\,,
 \end{equation*}
  then, again by the semi-group property of the Riesz potential and of $\gamma$ we deduce that
  \begin{equation*}
  (-\Delta)^{m/2}\log|x|=I_{2-m}(-\Delta\log|x|)=-2\pi I_{2-m}\delta_0=-\frac{2\pi}{\gamma(2-m)|x|^m}.
  \end{equation*}
  Finally, it suffices to note that 
  $$
  \gamma(2-m)=\frac{\gamma(1)^2}{\gamma(m)}=\frac{4\pi^2}{\gamma(m)}\,,
  $$
  to conclude that $\frac{2\pi}{\gamma(2-m)}=\frac{\gamma(m)}{2\pi}=\frac{\gamma(m)}{\omega_1}$.
 \end{proof}
 
In the paper of Morii, Sato, and Sawano \cite{MoriiSatoSawano}, they verify that the identities \eqref{log_equality} and \eqref{eq:5} hold and compute explicitly the constants in terms of some combinatorial quantities.  We here recall their values.  If we denote by
\begin{equation*}
  (\nu)_k=
  \begin{cases}
    \Prod_{j=0}^{k-l}(\nu-j)&\text{ for }\nu\in\R, k\in\N\\
   1 &\text{ for }\nu\in\R, k=0
  \end{cases}
  ,
\end{equation*}
then the relevant quantities are firstly
\begin{align}
    \label{eq:6}
    \ell_N^m&=m!\sum_{l=0}^{\left \lfloor{m/2}\right \rfloor}
    (m-2l)!l!\left(\frac{N-3}{2}+l\right)_l
\left(\sum_{n=\left \lceil{m/2}\right
    \rceil}^{m-l}2^{2n-m+l}\frac{(-1)^n}{2n}{\binom{n}{m-n}}{\binom{m-n}{l}}\right)^2\,,\\
\intertext{and secondly}
    \label{eq:7}
    \lambda_N^{s,m}&=m!\sum_{l=0}^{\left \lfloor{m/2}\right \rfloor}
    (m-2l)!l!\left(\frac{N-3}{2}+l\right)_l
\left(\sum_{n=\left \lceil{m/2}\right
    \rceil}^{m-l}2^{2n-m+l}\binom{\frac{s}{2}}{n}{\binom{n}{m-n}}{\binom{m-n}{l}}\right)^2\,.
  \end{align}

In the next proposition we construct a family of smooth maps that will be useful for the proof of Theorems \ref{thm2} and \ref{thm1}. The same construction was used in \cite{Shafrir:2017}.
\begin{proposition}
  \label{prop:prop}
There exists a family of maps
$\{u_\varepsilon\}_{\varepsilon>0}\subset C^\infty_c(\R^N)$ satisfying
\begin{flalign}
u_\varepsilon(x)=\log(1/|x|) \text{ on }B_1\setminus B_\varepsilon,\label{eq:43}\\
\|u_\varepsilon\|_{L^\infty(\R^N)}=u_\varepsilon(0)=\log(1/\varepsilon)+O(1),\label{eq:44}\\
\text{supp}(u_\varepsilon)\subset B_2,\label{eq:45}\\
\big\||\nabla^k u_\varepsilon|\big\|_{L^\infty(B_\varepsilon)}=
  O(\varepsilon^{-k}),\,1\leq k\leq N,\label{eq:46}\\
\big\||\nabla^k u_\varepsilon|\big\|_{L^\infty(B_2\setminus B_1)}=O(1), \,1\leq k\leq N.\label{eq:47}
\end{flalign}
\end{proposition}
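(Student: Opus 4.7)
The construction is completely explicit: I would build $u_\varepsilon$ as the product of a fixed outer cutoff (independent of $\varepsilon$) supported in $B_2$ and a smooth radial regularization of $\log(1/|x|)$ that agrees with it outside $B_\varepsilon$ and is obtained by replacing the argument $|x|^2$ inside the logarithm with a smooth function that stays bounded away from zero.

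Concretely, I would fix once and for all a smooth non-decreasing function $\rho\colon [0,\infty)\to(0,\infty)$ with $\rho(s)=s$ for $s\ge 1$ and $\rho(0)=c_0\in(0,1)$, and a smooth radial cutoff $\chi\in C^\infty_c(\R^N)$ with $\chi\equiv 1$ on $\overline{B_1}$, $\chi\equiv 0$ outside $B_2$, and $0\le\chi\le1$. Then I would set
\begin{equation*}
h_\varepsilon(x):=-\log\varepsilon-\tfrac12\log\rho(|x|^2/\varepsilon^2),\qquad u_\varepsilon(x):=\chi(x)\,h_\varepsilon(x).
\end{equation*}
Because $\rho$ is smooth and strictly positive and $|x|^2$ is a smooth function of $x$, $h_\varepsilon$ is smooth on all of $\R^N$; since $\chi$ is smooth with compact support, $u_\varepsilon\in C^\infty_c(\R^N)$ with $\operatorname{supp}u_\varepsilon\subset B_2$, giving \eqref{eq:45}. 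For $|x|\ge\varepsilon$ one has $|x|^2/\varepsilon^2\ge 1$, so $\rho(|x|^2/\varepsilon^2)=|x|^2/\varepsilon^2$ and $h_\varepsilon(x)=\log(1/|x|)$; combined with $\chi\equiv 1$ on $B_1$, this yields \eqref{eq:43}.

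For the $L^\infty$ estimate \eqref{eq:44}, note that the monotonicity of $\rho$ makes $h_\varepsilon$ radially non-increasing on $B_\varepsilon$, so $\sup_{B_\varepsilon}h_\varepsilon=h_\varepsilon(0)=\log(1/\varepsilon)+\tfrac12\log(1/c_0)$; on $B_1\setminus B_\varepsilon$ we have $h_\varepsilon=\log(1/|x|)\le\log(1/\varepsilon)<h_\varepsilon(0)$ since $c_0<1$; and on $B_2\setminus B_1$ one has $|u_\varepsilon|\le\log 2$, which is dominated by $h_\varepsilon(0)$ for $\varepsilon$ small. For the derivative bounds \eqref{eq:46}, I would exploit the scaling $h_\varepsilon(x)=g(x/\varepsilon)-\log\varepsilon$, where $g(y):=-\tfrac12\log\rho(|y|^2)$ is a fixed smooth function of $y$; hence $\nabla^k h_\varepsilon(x)=\varepsilon^{-k}(\nabla^k g)(x/\varepsilon)$, and since $\chi=1$ on $B_\varepsilon$ and $\nabla^k g$ is bounded on $\overline{B_1}$, the bound $O(\varepsilon^{-k})$ is immediate. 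Finally, on $B_2\setminus B_1$ one has $h_\varepsilon(x)=\log(1/|x|)$, which together with $\chi$ and all their derivatives are uniformly bounded, so the Leibniz rule gives \eqref{eq:47}.

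There is no real obstacle here; the argument is a standard radial regularize-and-cutoff and each of the five properties follows by a direct check. The only small subtlety worth flagging is the smoothness of $h_\varepsilon$ at the origin, which is why $\rho$ is applied to $|x|^2$ rather than to $|x|$, and the choice $\rho(0)<1$, which ensures that the supremum of $u_\varepsilon$ is attained at $0$ rather than on the sphere $\partial B_\varepsilon$.
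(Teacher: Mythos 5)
Your construction is correct and is essentially the same as the paper's: both regularize $\log(1/|x|)$ near the origin by an explicit smooth radial modification at scale $\varepsilon$ (the paper truncates the integrand $1/s$ in $\int_\varepsilon^t ds/s$, you replace the argument of the logarithm by $\rho(|x|^2/\varepsilon^2)$) and then multiply by a fixed cutoff supported in $B_2$. All five properties are verified soundly, and your scaling identity $\nabla^k h_\varepsilon(x)=\varepsilon^{-k}(\nabla^k g)(x/\varepsilon)$ is a clean way to obtain \eqref{eq:46}.
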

\begin{proof}
Let $\varphi\in C^\infty[0,\infty)$ satisfy $\varphi\equiv
0$ on $[0,1/2]$, $\varphi\equiv 1$ on
$[1,\infty)$ and $\varphi(t)\in[0,1]$ for all $t$. For each
$\varepsilon>0$ define $\varphi_\varepsilon(t)=\varphi(t/\varepsilon)$ on
$[0,\infty)$. Clearly,
\begin{equation}
  \label{eq:17}
  \|\varphi_\varepsilon^{(j)}\|_{L^\infty(0,\infty)}\leq\frac{C_j}{\varepsilon^j},\quad
  \forall j\ge 1.
\end{equation}
Set on $[0,\infty)$,
$f_\varepsilon(t)=(-\log\varepsilon)-\int_{\varepsilon}^t \frac{\varphi_\varepsilon (s)}{s}\,ds$.
Finally, let
 $\zeta\in C^\infty_c(\R^N)$ be a cut-off function satisfying
 $supp(\zeta)\subset B_2(0)$ and
 $\zeta=1$ on $B_1(0)$ and set
 $u_\varepsilon(x)=\zeta(x)f_\varepsilon(|x|)$ on $\R^N$. The validity
 of \eqref{eq:43},\eqref{eq:45} and \eqref{eq:47} is clear from the
 definition. 
Combining \eqref{eq:17} with the estimates (see \eqref{log_equality})
\begin{equation}
  \label{eq:20}
  \big\| |\nabla^k\log |x|\big\|_{L^\infty(B_{\varepsilon}\setminus
    B_{\varepsilon/2})}\leq \frac{C_k}{\varepsilon^k}, \,1\leq k\leq N,
\end{equation}
 with \eqref{eq:17} yields \eqref{eq:46}. Finally, \eqref{eq:44}
 follows from the case $k=1$ in \eqref{eq:46}, together with \eqref{eq:43}. 

\end{proof}

\section{PDE and Potential Estimates}\label{PDEs}

In this Section we prove two versions of \eqref{log_PDE} with precise
constant, as well as some useful corollaries.  Let us first give the
following result, which extends \cite[Proposition~3.1]{Shafrir:2017} from $k=N$ to any $k \in \mathbb{N}$.

\begin{theorem}\label{thm1}
For $k \in \mathbb{N}$ one has the equality
\begin{align*}
(-1)^{k}\div_k  \left(|x|^{2k-N} \nabla^k \log |x|\right) = -\ell^k_N \SNO \delta_0
\end{align*}
in the sense of distributions.
\end{theorem}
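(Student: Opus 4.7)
The plan is to prove the equality in three steps, leveraging the structure theorem for distributions supported at a point. Let $T:=(-1)^k\div_k(|x|^{2k-N}\nabla^k\log|x|)$; I would show (i) $T$ vanishes pointwise for $x\neq 0$, (ii) $T$ is homogeneous of degree $-N$, whence $T=c\,\delta_0$ for some $c\in\R$, and (iii) determine $c=-\ell^k_N\SNO$ by pairing $T$ with the explicit cutoffs $\{u_\varepsilon\}$ from \rprop{prop:prop}.

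For (i), the claim is that the smooth tensor field $|x|^{2k-N}\nabla^k\log|x|$ has vanishing $k$-fold divergence away from the origin. My approach would be a direct multi-index computation, expanding via the Leibniz rule and exploiting the explicit formulas of Morii--Sato--Sawano (equations \eqref{log_equality} and \eqref{eq:5}) for the derivatives of a radial function; the case $k=1$ reduces to the classical identity $\div(x/|x|^N)=0$ for $x\neq 0$. The feature making the cancellations go through is that the weight $|x|^{2k-N}$ is precisely tuned to the scale invariance of the bilinear form $u\mapsto\int|x|^{2k-N}|\nabla^k u|^2\,dx$, of which $\log|x|$ is (formally) a critical point.

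For (ii), the scaling is immediate: $\nabla^k\log|x|$ is homogeneous of degree $-k$ and $|x|^{2k-N}$ of degree $2k-N$, so $T$ is homogeneous of degree $-N$ as a distribution. Coupled with (i), the structure theorem forces $T=\sum_{|\alpha|\leq m}c_\alpha\partial^\alpha\delta_0$, and since each $\partial^\alpha\delta_0$ has homogeneity $-N-|\alpha|$, only $|\alpha|=0$ contributes; hence $T=c\,\delta_0$.

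For (iii), I pair $T$ with $u_\varepsilon$: on one hand $\langle T,u_\varepsilon\rangle = c\,u_\varepsilon(0) = c\log(1/\varepsilon)+O(1)$ by \eqref{eq:44}. On the other, integration by parts gives
\[
\langle T,u_\varepsilon\rangle=\int_{\R^N}|x|^{2k-N}\nabla^k\log|x|\cdot\nabla^k u_\varepsilon\,dx,
\]
and I split the integral into $B_\varepsilon$, $B_1\setminus B_\varepsilon$, and $B_2\setminus B_1$. On the middle annulus \eqref{eq:43} gives $u_\varepsilon=-\log|x|$, so the integrand equals $-|x|^{2k-N}|\nabla^k\log|x||^2=-\ell^k_N|x|^{-N}$ by \eqref{log_equality}, yielding the leading term $-\ell^k_N\SNO\log(1/\varepsilon)$. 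The contribution from $B_\varepsilon$ is bounded using \eqref{eq:46} together with $|\nabla^k\log|x||\leq C|x|^{-k}$, giving an integrand controlled by $C\varepsilon^{-k}|x|^{k-N}$ which integrates to $O(1)$; the contribution from $B_2\setminus B_1$ is $O(1)$ via \eqref{eq:47}. Comparing the two evaluations and letting $\varepsilon\to 0$ yields $c=-\ell^k_N\SNO$. The main obstacle is step (i): establishing the pointwise PDE for arbitrary $k\geq 2$ requires careful multi-index bookkeeping, even though the cases $k=1$ and $k=N$ (the latter in \cite{Shafrir:2017}) are tractable by direct manipulation.
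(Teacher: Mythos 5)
Your steps (ii) and (iii) are sound and essentially match the paper; in fact your route to $T=c\,\delta_0$ via the homogeneity degree $-N$ (which rules out $\partial^\alpha\delta_0$ for $|\alpha|\geq 1$) is a touch cleaner than the paper's argument, which instead writes $T=\div G$ with $|G_j|\leq C|x|^{1-N}\in L^1_{\rm loc}$ to exclude higher-order derivatives of $\delta_0$. Step (iii) is identical to the paper's use of $\{u_\varepsilon\}$ and \eqref{eq:43}--\eqref{eq:47}.

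The genuine gap is step (i). You assert that $(-1)^k\div_k(|x|^{2k-N}\nabla^k\log|x|)=0$ pointwise for $x\neq0$ and propose to verify it by ``careful multi-index bookkeeping,'' explicitly flagging it as the main obstacle without carrying it out. The variational heuristic you offer (that $\log|x|$ is a critical point of $u\mapsto\int|x|^{2k-N}|\nabla^k u|^2$) is circular: that $\log|x|$ satisfies the Euler--Lagrange equation of this functional is precisely the identity you are trying to prove. The paper avoids the computation entirely by a short indirect argument: by rotation invariance and scaling, the smooth function $F$ on $\R^N\setminus\{0\}$ must be of the form $F(x)=c|x|^{-N}$, and then $c=0$ follows by contradiction. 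Specifically, one tests against $v_\varepsilon=\zeta\varphi_\varepsilon$; by Cauchy--Schwarz and the pointwise identity $|x|^{2k-N}\big|\ell_N^{k\,-1/2}\nabla^k\log|x|\big|=1$ one has
\[
C\ \geq\ \int_{\R^N}|\nabla^k v_\varepsilon|\ \geq\ \frac{1}{\sqrt{\ell_N^k}}\left|\int_{B_2\setminus B_{\varepsilon/2}}(\nabla^k v_\varepsilon)\cdot\big(|x|^{2k-N}\nabla^k\log|x|\big)\right|
=\frac{|c|}{\sqrt{\ell_N^k}}\,\SNO\log(1/\varepsilon)+O(1),
\]
which blows up unless $c=0$. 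You should replace your step (i) with this argument, or else actually supply the Leibniz-rule computation, which as written is only a plan.
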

A useful consequence of Theorem \ref{thm1} that we will require in the
sequel is the following corollary which establishes a potential estimate for a function in terms of its higher order gradient.
\begin{corollary}\label{endpoint_derivative}
Let $m \in \mathbb{N}$, $m<N$.  Then one has
\begin{equation*}
|u(x)| \leq \frac{\gamma(m)}{\sqrt{\ell^m_N}\SNO} I_m(|\nabla^m u|)(x)\,,~\forall u\in C^\infty_c(\R^N).
\end{equation*}
\end{corollary}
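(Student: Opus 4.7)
The plan is to derive a pointwise representation formula for $u(x)$ directly from the distributional identity in Theorem \ref{thm1} (specialized to $k=m$), and then estimate the resulting integral by a pointwise Cauchy--Schwarz inequality combined with the norm formula \eqref{log_equality}.

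First, pairing the identity of Theorem \ref{thm1} with a test function $v \in C^\infty_c(\R^N)$ and using the definition of distributional derivatives --- the factor $(-1)^m$ from transferring $\div_m$ onto $v$ cancels the prefactor $(-1)^m$ in the statement --- yields
\begin{equation*}
\int_{\R^N} |y|^{2m-N}\, \nabla_y^m \log|y| \cdot \nabla^m v(y)\, dy = -\ell^m_N\, \SNO\, v(0),
\end{equation*}
where $\cdot$ denotes the Euclidean inner product of tensors in $\R^{N^m}$. Specializing to $v(y) = u(x+y)$ for fixed $x \in \R^N$ and changing variables $z = x+y$ (noting that $\nabla^m_y \log|y|$ evaluated at $y=z-x$ equals $\nabla^m_z \log|z-x|$ by the chain rule) produces the representation
\begin{equation*}
u(x) = -\frac{1}{\ell^m_N\, \SNO} \int_{\R^N} |z-x|^{2m-N} \, \nabla_z^m \log|z-x| \cdot \nabla^m u(z)\, dz.
\end{equation*}

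Taking absolute values, applying Cauchy--Schwarz pointwise in $z$, and substituting $|\nabla_z^m \log|z-x|| = \sqrt{\ell^m_N}/|z-x|^m$ from \eqref{log_equality}, one arrives at
\begin{equation*}
|u(x)| \leq \frac{1}{\sqrt{\ell^m_N}\, \SNO} \int_{\R^N} \frac{|\nabla^m u(z)|}{|z-x|^{N-m}}\, dz = \frac{\gamma(m)}{\sqrt{\ell^m_N}\, \SNO}\, I_m(|\nabla^m u|)(x),
\end{equation*}
where the last equality is the definition \eqref{eq:I-alpha} of the Riesz potential $I_m$, giving exactly the claim.

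The main technical point is justifying that the distributional identity of Theorem \ref{thm1} unfolds into the integral formula displayed in the first step. By \eqref{log_equality} the kernel has magnitude $\sqrt{\ell^m_N}|y|^{m-N}$, which is locally integrable precisely because $m<N$, so when paired against the smooth compactly supported tensor $\nabla^m v$ the integral is absolutely convergent and no boundary contributions arise at the origin. The rest is a routine chain of pointwise inequalities.
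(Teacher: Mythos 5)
Your proof is correct and follows essentially the same route as the paper: unfold the distributional identity of Theorem~\ref{thm1} into a convolution-type representation of $u(x)$ against the kernel $|y|^{2m-N}\nabla^m\log|y|$, then apply the pointwise Cauchy--Schwarz inequality together with \eqref{log_equality} and identify the resulting integral as $\gamma(m)\,I_m(|\nabla^m u|)(x)$. The only difference is that you spell out the test-function pairing and the local integrability of the kernel, which the paper leaves implicit.
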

Let us quickly prove the corollary before we return to give the proof of Theorem \ref{thm1}.
\begin{proof}
As a result of Theorem \ref{thm1}, if $u \in C^\infty_c(\mathbb{R}^N)$ we have
\begin{align*}
u(x) = \frac{1}{\ell^m_N\SNO} \int_{\mathbb{R}^N} |y|^{2m-N} \nabla^m u(x-y)\cdot \nabla^m \log (1/|y|) \;dy,
\end{align*}
and using finite dimensional Cauchy-Schwarz and the relation \eqref{log_equality} we find
\begin{align*}
|u(x)| &\leq \frac{1}{\ell^m_N\SNO} \int_{\mathbb{R}^N} |y|^{2m-N} |\nabla^m u(x-y)| \frac{\sqrt{\ell^m_N}}{|y|^m}\;dy \\
&= \frac{\gamma(m)}{\sqrt{\ell^m_N}\SNO} I_m |\nabla^m u|,
\end{align*}
which is the desired result.
\end{proof}

\begin{proof}[Proof of Theorem \ref{thm1}]
We argue as in the proof in \cite[Proposition~3.1]{Shafrir:2017}.  Thus, we define the function
  \begin{equation}
\label{eq:10}
    F(x):=(-1)^k \div_k \left(|x|^{2k-N}  \nabla^k \log|x|\right),
  \end{equation}
  which again belongs to $C^\infty(\R^N\setminus\{0\})$. 
As in \cite{Shafrir:2017}, it is easy to verify that $F$ is a {\em
  radial} function which is homogenous of
 degree $-N$. Therefore it must be of the form
 \begin{equation}
   \label{eq:11}
   F(x)=c|x|^{-N},
 \end{equation}
 for some constant $c\in\R$. We claim that $c=0$. 
\par Assume by contradiction that $c\neq 0$. For each $\varepsilon\in(0,1)$  set
 $v_\varepsilon=\zeta\varphi_\varepsilon\in C^\infty_c(\R^N)$, with the same $\zeta$ and
 $\varphi_\varepsilon(t)=\varphi(t/\varepsilon)$  as defined in the course of
 the proof of Proposition~\ref{prop:prop}. It is easy to verify that 
 \begin{equation}
   \label{eq:13}
   \int_{\R^N}|\nabla^kv_\varepsilon|\leq C,~\text{ uniformly in }\varepsilon.
 \end{equation}
On the other hand, by \eqref{log_equality} we have 
\begin{equation}
\label{eq:26}
|x|^{2k-N}
\big|\frac{1}{\sqrt{\ell^k_N}}\nabla^k\log|x|\big|=1,\text{ for all }x\ne0,
\end{equation}
whence
\begin{equation}
  \label{eq:9}
  \int_{\R^N}|\nabla^kv_\varepsilon|\geq
  \int_{B_2\setminus B_{\varepsilon/2}}|\nabla^kv_\varepsilon|\geq 
\frac{1}{\sqrt{\ell^k_N}}\left|\int_{B_2\setminus B_{\varepsilon/2}}\!\!\!\left(\nabla^kv_\varepsilon\right)\cdot\left(|x|^{2k-N}\nabla^k\log|x|\right)\right|\,.
\end{equation}
Applying integration by parts to the integral on  the R.H.S.~of \eqref{eq:9} and using
\eqref{eq:11} gives
\begin{multline}
  \label{eq:14}
\frac{1}{\sqrt{\ell^k_N}}\int_{B_2\setminus B_{\varepsilon/2}}\left(\nabla^kv_\varepsilon\right)\cdot\left(|x|^{2k-N}\nabla^k\log|x|\right)=\frac{1}{\sqrt{\ell^k_N}}\int_{B_2\setminus B_{\varepsilon/2}}Fv_\varepsilon\\=
\frac{c}{\sqrt{\ell^k_N}}\int_{B_2\setminus B_{\varepsilon/2}}\frac{v_\varepsilon}{|x|^N}=\frac{c}{\sqrt{\ell^k_N}}\int_{B_1\setminus B_{\varepsilon}}\frac{dx}{|x|^N}+O(1)=
\frac{c}{\sqrt{\ell^k_N}}\,\SNO\log(1/\varepsilon)+O(1)\,,
\end{multline}
where $O(1)$ denotes a bounded quantity,
 uniformly in $\varepsilon$.
Combining \eqref{eq:9}--\eqref{eq:14} with \eqref{eq:13} leads to a
contradiction for $\varepsilon$ small enough, whence $c=0$ as claimed.
\par From the above we deduce that the {\em distribution} 
\begin{equation}
\label{eq:15}
  \mathcal{F}:=(-1)^k \div_k \left(|x|^{2k-N}  \nabla^k \log|x|\right) \in \mathcal{D}'(\R^N)
\end{equation}
satisfies $supp(\mathcal{F})\subset\{0\}$. By a celebrated theorem of
L.~Schwartz~\cite{sch} it follows that
\begin{equation}
  \label{eq:16}
  \mathcal{F}=\sum_{j=1}^Lc_jD^{\alpha_j} \delta_0\,,
\end{equation}
for some multi-indices  $\alpha_1,\ldots,\alpha_L$.
But by \eqref{eq:5} the R.H.S.~of \eqref{eq:15} can be
written as
\begin{equation*}
(-1)^k\div_k \left(|x|^{2k-N}  \nabla^k \log|x|\right)= \div G,
\end{equation*}
with $G=(G_1,\ldots,G_N):\R^N\to\R^N$ satisfying $|G_j(x)|\leq
C/|x|^{N-1}$, for all $j$. Hence 
$G\in L^1_{\text{loc}}(\R^N,\R^N)$. It follows that $\mathcal{F}$ in
\eqref{eq:15} is a sum of first derivatives of functions in
$L^1_{\text{loc}}$, whence for some $\mu\in\R$,
\begin{equation}
  \label{eq:41}
  \mathcal{F}=\mu\delta_0\,.
\end{equation}
\par It remains to determine the value of $\mu$ in \eqref{eq:41}. For
that matter we use the test functions
$\{u_\varepsilon\}$ given by Proposition~\ref{prop:prop}.
By \eqref{eq:15} and \eqref{eq:41}  we have
 \begin{equation}
   \label{eq:21}
   \mu u_\varepsilon(0)=\int_{\R^N}|x|^{2k-N}\left(\nabla^k
     u_\varepsilon\right)\cdot\left(\nabla^k\log |x|\right)\,.
 \end{equation}
By \eqref{eq:43}--\eqref{eq:47} we get for the R.H.S.~of \eqref{eq:21},
\begin{multline}
  \label{eq:23}
  \int_{\R^N}|x|^{2k-N}\left(\nabla^k
     u_\varepsilon\right)\cdot\left(\nabla^k\log |x|\right)=-\int_{\{\varepsilon<|x|<1\}}|x|^{2k-N}\left(\nabla^k
     \log |x|\right)\cdot\left(\nabla^k\log |x|\right)+O(1)\\=-\ell^k_N \int_{\{\varepsilon<|x|<1\}}\frac{dx}{|x|^N}+O(1)=-\ell^k_N\SNO(-\log\varepsilon)+O(1)\,.
\end{multline}
 On the other hand, for the L.H.S.~of
 \eqref{eq:21} we have by \eqref{eq:43} that 
 \begin{equation}
   \label{eq:24}
    \mu u_\varepsilon(0)=-\mu\log\varepsilon+O(1).
 \end{equation}
Plugging \eqref{eq:23}--\eqref{eq:24} in \eqref{eq:21} yields
$\mu=-\ell^k_N\SNO$ as claimed.
\end{proof}

For the more general family of inequalities we prove in this paper, we require the following version of \eqref{log_PDE} in the regime $\alpha>k$. 
\begin{theorem}\label{thm0}
For every $k\in \mathbb{N}$, $\alpha \in (k,\infty)$, we have 
\begin{equation}
\label{eq:37}
(-1)^k \div_k  \left(|x|^{2\alpha-N} \nabla^k \left( \frac{1}{|x|^{\alpha-k}} \right)\right) = \frac{\lambda_N^{k-\alpha,k}}{|x|^{N-\alpha+k}}\,.
\end{equation}
\end{theorem}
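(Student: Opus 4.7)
The plan is to verify the identity pointwise on $\mathbb{R}^N\setminus\{0\}$, where both sides are smooth, in two steps: first reduce the LHS to a scalar multiple of $|x|^{\alpha-N-k}$ using scaling and rotational symmetry, then identify the scalar via a single integration by parts combined with the algebraic identity \eqref{eq:5}.

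\textbf{Reduction by scaling and symmetry.} Let $F_\alpha(x)$ denote the LHS of \eqref{eq:37}. The factor $|x|^{-(\alpha-k)}$ is homogeneous of degree $k-\alpha$; each application of $\nabla$ decreases the degree by $1$; the factor $|x|^{2\alpha-N}$ shifts the degree by $2\alpha-N$; and $\div_k$ decreases it by another $k$. Altogether $F_\alpha$ is homogeneous of degree $(k-\alpha)-k+(2\alpha-N)-k = \alpha-N-k$. Moreover, each operator in the expression commutes with the action of any $R\in O(N)$ on $\R^N$ (acting on each tensor slot), so $F_\alpha$ is radial. It follows that $F_\alpha(x)=c_{k,\alpha}\,|x|^{\alpha-N-k}$ for some constant $c_{k,\alpha}$, and the theorem reduces to showing $c_{k,\alpha}=\lambda_N^{k-\alpha,k}$.

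\textbf{Identification of the constant.} Set $v(x):=|x|^{k-\alpha}$ and $T:=|x|^{2\alpha-N}\nabla^k v$, so that $F_\alpha=(-1)^k\div_k T$. For $0<\varepsilon<1$, multiply both sides of the putative equality $F_\alpha=c_{k,\alpha}|x|^{\alpha-N-k}$ by $v$ and integrate over the annulus $A_\varepsilon=\{\varepsilon<|x|<1\}$. The RHS pairing yields $c_{k,\alpha}\,\omega_{N-1}\log(1/\varepsilon)$. Integrating by parts $k$ times on the LHS and then invoking \eqref{eq:5} with $s=k-\alpha$ gives
\[
\int_{A_\varepsilon}v\,F_\alpha\,dx = \int_{A_\varepsilon}|x|^{2\alpha-N}|\nabla^k v|^2\,dx+\mathcal{B}(\varepsilon)=\lambda_N^{k-\alpha,k}\,\omega_{N-1}\log(1/\varepsilon)+\mathcal{B}(\varepsilon),
\]
where $\mathcal{B}(\varepsilon)$ collects the $k$ boundary integrals arising from the integrations by parts over $\partial A_\varepsilon$. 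Each such integral has the form $\int_{|x|=r}\Phi\,dS$ with $r\in\{\varepsilon,1\}$, where $\Phi$ is a contraction of one factor of type $\nabla^j v$ (homogeneous of degree $k-\alpha-j$), one factor of type $\nabla^{k-1-j}T$ (degree $\alpha-N-k+1+j$), and powers of the unit normal $\nu=x/|x|$ (degree $0$); hence $\Phi$ has total homogeneity $(k-\alpha-j)+(\alpha-N-k+1+j)=1-N$. Paired with the surface measure $dS\sim r^{N-1}$, each boundary contribution is scale invariant in $r$ and therefore bounded independently of $\varepsilon$, so $\mathcal{B}(\varepsilon)=O(1)$. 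Dividing by $\omega_{N-1}\log(1/\varepsilon)$ and letting $\varepsilon\to 0^+$ yields $c_{k,\alpha}=\lambda_N^{k-\alpha,k}$, completing the proof.

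\textbf{Main obstacle.} The only delicate bookkeeping is to confirm that every boundary integrand generated by the repeated integration by parts is indeed homogeneous of degree $1-N$, so that its integral over a sphere of radius $r$ is scale invariant (and in particular bounded uniformly as $\varepsilon\to 0^+$). Once this homogeneity check is in place, the identification of $c_{k,\alpha}$ is forced by matching the coefficients of $\log(1/\varepsilon)$ on the two sides.
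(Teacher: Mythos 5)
Your proof is correct and follows essentially the same route as the paper's: reduce by radiality and homogeneity to $F_\alpha=c_{k,\alpha}|x|^{\alpha-N-k}$ on $\R^N\setminus\{0\}$, then identify the constant by pairing with $|x|^{k-\alpha}$ over an annulus and matching the coefficient of $\log(1/\varepsilon)$ via \eqref{eq:5}. The only technical difference is in the constant-identification step: the paper avoids boundary terms by testing against the smooth cutoffs $v_\varepsilon|x|^{k-\alpha}$ built from Proposition~\ref{prop:prop} and estimating the transition regions by scaling, whereas you integrate by parts on the sharp annulus and dispose of the boundary terms by the degree-$(1-N)$ homogeneity check, which is correct (each sphere integral is literally independent of $r$). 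One caveat: you prove the identity only pointwise away from the origin, while it is later used distributionally (Corollary~\ref{intermediate_derivatives} integrates it by parts against $\nabla^k$ of a Schwartz convolution over all of $\R^N$); the upgrade is immediate from your own bookkeeping, since $\nabla^j T$ is homogeneous of degree $\alpha-N-j>-N$ for $j\le k$ (as $\alpha>k$), so the boundary contributions at $|x|=\delta$ in the defining integration by parts are $O(\delta^{\alpha-k})\to 0$ --- this is the content of the paper's brief remark that $|F|\le C|x|^{\alpha-N-k}\in L^1_{\mathrm{loc}}(\R^N)$.
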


To see that Theorem \ref{thm0} is a version of \eqref{log_PDE}, we note first that by \rlemma{lem:lap-log} we have 
\begin{align*}
(-\Delta)^{(\alpha-k)/2} \log |x| = -\left(\frac{\gamma(\alpha-k)}{\SNO}\right)\frac{1}{|x|^{\alpha-k}},
\end{align*}
and  invert the outermost fractional Laplacian $(-\Delta)^{(\alpha-k)/2}$ on the L.H.S.~of \eqref{log_PDE}.

Again, before we prove Theorem \ref{thm0}, let us record and prove another useful sharp potential representation, the following 
\begin{corollary}\label{intermediate_derivatives}
Let $m \in \mathbb{N}$, $m<N$ and $k \in \{1,\ldots, m-1\}$.  Then one has
\begin{equation*}
  |u(x)|\leq \frac{\gamma(m)}{\gamma(m-k)\sqrt{\lambda_N^{k-m,k}}} I_m |\nabla^k (-\Delta)^{(m-k)/2}u|.
\end{equation*}
\end{corollary}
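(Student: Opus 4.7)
The plan is to mirror the derivation of Corollary~\ref{endpoint_derivative}, but with \rth{thm0} playing the role of \rth{thm1}. The first step will be to convert the pointwise identity in \eqref{eq:37} (with $\alpha=m$) into a genuine fundamental-solution identity. The right-hand side of \eqref{eq:37} is $\lambda_N^{k-m,k}/|x|^{N-(m-k)}$, which is (up to a multiplicative constant) the Riesz kernel representing $I_{m-k}\delta_0$; more precisely, $|x|^{-(N-(m-k))}=\gamma(m-k)\,I_{m-k}\delta_0$ in the sense of distributions. Applying $(-\Delta)^{(m-k)/2}$ to both sides of \eqref{eq:37} and invoking the semigroup relation $(-\Delta)^{(m-k)/2}I_{m-k}=\mathrm{Id}$ established in Section~\ref{preliminaries}, one obtains
\begin{equation*}
(-1)^k \div_k (-\Delta)^{(m-k)/2}\!\left(|x|^{2m-N}\nabla^k |x|^{k-m}\right)=\lambda_N^{k-m,k}\gamma(m-k)\,\delta_0 \quad \text{in } \mathcal{D}'(\R^N).
\end{equation*}

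Next, I will use this as a fundamental-solution-type representation for $u\in C^\infty_c(\R^N)$. Testing the above distributional equality against a translate of $u$, integrating by parts $k$ times (the total weight $|y|^{2m-N}\nabla^k |y|^{k-m}$ has integrable singularity of order $|y|^{m-N}$ at the origin, so this is justified by a standard excision-and-limit argument), and using the self-adjointness of the fractional Laplacian $(-\Delta)^{(m-k)/2}$ to move it onto the test function, yields
\begin{equation*}
u(x)=\frac{1}{\lambda_N^{k-m,k}\gamma(m-k)}\int_{\R^N}|x-y|^{2m-N}\,\nabla^k |x-y|^{k-m}\cdot \nabla^k(-\Delta)^{(m-k)/2}u(y)\,dy.
\end{equation*}

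Finally, I will estimate this representation pointwise. Applying the Euclidean Cauchy--Schwarz inequality in $\R^{N^k}$ and substituting the Morii--Sato--Sawano identity \eqref{eq:5} with $s=k-m$, namely $|\nabla^k |x|^{k-m}|=\sqrt{\lambda_N^{k-m,k}}/|x|^m$, the weight collapses:
\begin{equation*}
|x-y|^{2m-N}\,|\nabla^k |x-y|^{k-m}|=\frac{\sqrt{\lambda_N^{k-m,k}}}{|x-y|^{N-m}}.
\end{equation*}
Inserting this into the representation and recognizing the resulting integral as $\gamma(m) I_m|\nabla^k(-\Delta)^{(m-k)/2}u|(x)$ gives precisely the claimed inequality with constant $\gamma(m)/(\gamma(m-k)\sqrt{\lambda_N^{k-m,k}})$.

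The main technical obstacle will be the rigorous justification of the integration by parts and of the application of $(-\Delta)^{(m-k)/2}$ to \eqref{eq:37}: when $m-k$ is odd one cannot argue by local differentiation and must instead work via the Fourier/Riesz-potential definition on the distributional identity, while also controlling the behavior of $|x|^{2m-N}\nabla^k|x|^{k-m}$ near $0$ and at infinity so that no boundary contribution or non-local tail spoils the constant. Everything else is an algebraic unwinding of constants, and the key novelty---getting the sharp factor $\sqrt{\lambda_N^{k-m,k}}$ rather than a larger combinatorial bound---is built in via the pointwise identity \eqref{eq:5}.
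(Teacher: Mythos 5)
Your proposal is correct and follows essentially the same route as the paper: Theorem~\ref{thm0} with $\alpha=m$, the inversion $I_{m-k}(-\Delta)^{(m-k)/2}=\mathrm{Id}$, integration by parts to reach the representation of $u$ as a weighted pairing of $\nabla^k|x-y|^{k-m}$ with $\nabla^k(-\Delta)^{(m-k)/2}u$, then Cauchy--Schwarz together with \eqref{eq:5}. The only (cosmetic) difference is that you apply $(-\Delta)^{(m-k)/2}$ to the identity \eqref{eq:37} to produce a $\delta_0$ on the right before pairing with $u$, whereas the paper first writes $u=I_{m-k}(-\Delta)^{(m-k)/2}u$ and then convolves \eqref{eq:37} with $(-\Delta)^{(m-k)/2}u$, which sidesteps the need to justify acting with a fractional Laplacian on that distributional identity.
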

\begin{proof}
By the semi-group property of the Riesz potentials and fractional Laplacian for $u \in C^\infty_c(\mathbb{R}^N)$ we have
\begin{align*}
u(x) = I_{m-k} (-\Delta)^{(m-k)/2}u = \frac{1}{\gamma(m-k)}\left(\frac{1}{|\cdot|^{N-(m-k)}} \ast (-\Delta)^{(m-k)/2}u\right)(x).
\end{align*}
An application of Theorem \ref{thm0} with $\alpha=m$ then yields
\begin{multline*}
\left(\frac{1}{|\cdot|^{N-(m-k)}} \ast (-\Delta)^{(m-k)/2}u\right)(x) \\
= \frac{1}{\lambda_N^{k-m,k}} \int_{\mathbb{R}^N} |y|^{2m-N} \nabla^k (-\Delta)^{(m-k)/2}u(x-y)\cdot \nabla^k \frac{1}{|y|^{m-k}} \;dy,
\end{multline*}
and thus
\begin{align*}
u(x) = \frac{1}{\gamma(m-k)\lambda_N^{k-m,k}}\int_{\mathbb{R}^N} |y|^{2m-N} \nabla^k (-\Delta)^{(m-k)/2}u(x-y)\cdot \nabla^k \frac{1}{|y|^{m-k}} \;dy.
\end{align*}
Then similarly by the finite dimensional Cauchy-Schwarz and the relation \eqref{eq:5} we find
\begin{align*}
|u(x)| &\leq \frac{1}{\gamma(m-k)\lambda_N^{k-m,k}}\int_{\mathbb{R}^N} |y|^{2m-N} |\nabla^k (-\Delta)^{(m-k)/2}u(x-y)| \frac{\sqrt{\lambda^{k-m,k}_N}}{|y|^m}\;dy \\
&= \frac{\gamma(m)}{\gamma(m-k)\sqrt{\lambda_N^{k-m,k}}} I_m |\nabla^k (-\Delta)^{(m-k)/2}u|(x),
\end{align*}
which is the desired result.
\end{proof}
\begin{proof}[Proof of Theorem~\ref{thm0}]
 Let the L.H.S.~of \eqref{eq:37} be denoted by $F(x)$, i.e.,
 \begin{equation}
 \label{eq:38}
    F(x) =(-1)^k \div_k  \left(|x|^{2\alpha-N} \nabla^k \left( \frac{1}{|x|^{\alpha-k}} \right)\right) .
  \end{equation}
By the same argument as in the proof of Theorem~\ref{thm1}, $F$ is a radial and homogenous of order $\alpha-N-k$. From \eqref{eq:38} we  deduce easily that $|F(x)|\leq C|x|^{\alpha-N-k}\in L^1_{\text{loc}}(\R^N)$, whence 
$F(x)=\mu|x|^{\alpha-N-k}$ for some constant $\mu$. To finish the proof of \eqref{eq:37} it remains
to determine the value of $\mu$.
\par For each $\varepsilon\in(0,1)$ let $v_\varepsilon\in
C^\infty_c(\R^N)$ be the function as defined in the proof of \rth{thm1}. 
We multiply the equation
\begin{equation*}
(-1)^k \div_k  \left(|x|^{2\alpha-N} \nabla^k \left( \frac{1}{|x|^{\alpha-k}} \right)\right) = \mu |x|^{\alpha-N-k}
\end{equation*}
by $v_\varepsilon |x|^{k-\alpha}$ and integrate over $\R^N$. For the
R.H.S.~we get
\begin{equation}
  \label{eq:25}
  \mu\int_{\R^N} |x|^{\alpha-N-k}
  \cdot v_\varepsilon |x|^{k-\alpha}=\mu\int_{B_1\setminus B_\varepsilon}|x|^{-N}+O(1)=\omega_{N-1}\mu\log(1/\varepsilon)+O(1).
\end{equation}
For the L.H.S.~we obtain, using the properties of $v_\varepsilon$ and \eqref{eq:5},
\begin{multline}
  \label{eq:27}
  \int_{\R^N}  (-1)^k \div_k  \left(|x|^{2\alpha-N} \nabla^k
    |x|^{k-\alpha}\right)\left(v_\varepsilon|x|^{k-\alpha}\right)\\= \int_{\R^N}  \left(|x|^{2\alpha-N} \nabla^k
    |x|^{k-\alpha}\right)\cdot\nabla^k\left(v_\varepsilon|x|^{k-\alpha}\right)
= \int_{B_1\setminus B_{\varepsilon}}\!\!|x|^{2\alpha-N}\Big|\nabla^k(|x|^{k-\alpha})\Big|^2+O(1)\\=\omega_{N-1}\lambda_N^{k-\alpha,k}\log(1/\varepsilon)+O(1).
\end{multline}
Equating \eqref{eq:25} to \eqref{eq:27} yields
$\mu=\lambda_N^{k-\alpha,k}$, as claimed.
  \end{proof}

\section{Proofs of the main results} \label{proofs}
We begin with the result that provides embedding of the spaces $X^{N,k}$ ($k=1,\ldots,N-1$) in $L^\infty(\R^N)$, with best constant when $N-k$ is even.
  \begin{proof}[Proof of Theorem~\ref{thm2}]
  We start with \eqref{eq:37}, for $\alpha=N$, to which we apply convolution with the
    function $(-\Delta)^{(N-k)/2}u\in\mathcal{S}$. For the L.H.S.~we
    get, applying the Cauchy-Schwarz inequality and \eqref{eq:5},
    \begin{multline}
      \label{eq:28}
     \int_{\R^N} |y|^{N} \nabla^k |y|^{k-N}\cdot\nabla^k
    \left ( (-\Delta)^{(N-k)/2}u (x-y)\right)\,dy\leq \\\left(\lambda_N^{k-N,k}\right)^{1/2}\int_{\R^N}\left|\nabla^k
      (-\Delta)^{(N-k)/2}u\right|.
    \end{multline}
For the R.H.S., we deduce using the Fourier transform, see
\cite{Stein:1970}*{p. 117}, that
\begin{equation}
  \label{eq:29}
  \lambda_N^{k-N,k}|x|^{-k}*\big((-\Delta)^{(N-k)/2}u\big)= \lambda_N^{k-N,k}\gamma(N-k)u(x).
\end{equation}
Combining \eqref{eq:28} and \eqref{eq:29} finally yields
\begin{equation*}
  |u(x)|\leq {\left(\lambda_N^{k-N,k}\right)}^{-1/2}{\gamma(N-k)}^{-1}\int_{\R^N}\left|\nabla^k
      (-\Delta)^{(N-k)/2}u\right|.
\end{equation*}
\par  Next we prove optimality of the constant $c_k$ in
\eqref{k_inequality_star} in the case where $N-k$ is even.
 Consider $u_\varepsilon$ as given by  Proposition~\ref{prop:prop}. 
 By \eqref{eq:5}, \eqref{eq:1} and \eqref{eq:43} we get,
 \begin{equation}
   \label{eq:31}
  \left|\nabla^k\left( (-\Delta)^{(N-k)/2}u_\varepsilon\right)\right|(x)=\left(\frac{\gamma(N-k)}{\omega_{N-1}}\right)\left(\lambda_N^{k-N,k}\right)^{1/2}|x|^{-N}\text{ for }\varepsilon<|x|<1.
 \end{equation}
 From \eqref{eq:31} we deduce, taking into account
 \eqref{eq:45}--\eqref{eq:47}, that
 \begin{multline}
   \label{eq:32}
   \int_{\R^N}\left|\nabla^k\left(
       (-\Delta)^{(N-k)/2}u_\varepsilon\right)\right|=\int_{B_1\setminus
   B_\varepsilon}\left|\nabla^k\left(
     (-\Delta)^{(N-k)/2}u_\varepsilon\right)\right|+O(1)\\
=\gamma(N-k)|\left(\lambda_N^{k-N,k}\right)^{1/2}\log(1/\varepsilon)+O(1).
 \end{multline}
Combining \eqref{eq:32} with \eqref{eq:43} yields
\begin{equation}
  \label{eq:33}
  \lim_{\varepsilon\to0} \frac{\sup_{x\in\R^N}
|u_\varepsilon(x)|}{\int_{\R^N}\left|\nabla^k\left(
       (-\Delta)^{(N-k)/2}u_\varepsilon\right)\right|}={\left(\lambda_N^{k-N,k}\right)^{-1/2}}\gamma(N-k)^{-1}\,.
\end{equation}
\end{proof}  
We continue with the results related to Adams inequality.   
\begin{proof}[Proof of \rth{thm4}]
Clearly it suffices to consider $u\in C^\infty_c(\Omega)$. We begin with the assertion given in Corollary \ref{endpoint_derivative}, which is the estimate
\begin{align*}
|u(x)| \leq \frac{\gamma(m)}{\sqrt{\ell^m_N}\SNO} I_m |\nabla^m u|.
\end{align*}
Then an application of Theorem \ref{Adams} with $f= |\nabla^m u|$ and $\alpha=m$ yields the estimate
\begin{align*}
\int_\Omega \exp\left (\frac{N}{\omega_{N-1}}\gamma(m)^{p^\prime} \left| \frac{I_m |\nabla^m u|}{\||\nabla^m u|\|_{L^p}}\right|^{p^\prime}\right)\;dx \leq A|\Omega|,
\end{align*}
which by monotonicity of the exponential implies
\begin{align*}
\int_\Omega \exp\left (\frac{N}{\omega_{N-1}} (\sqrt{\ell^m_N}\SNO)^{p^\prime}\left| \frac{u(x)}{\||\nabla^m u|\|_{L^p}}\right|^{p^\prime}\right)\;dx \leq A |\Omega|.
\end{align*}
This demonstrates the estimate holds with 
\begin{align*}
\widetilde\beta_0(m,N) = N (\ell^m_N)^{p^\prime/2} \SNO^{p^\prime-1}=N{\ell^m_N}^{\frac{N}{2(N-m)}} \SNO^{\frac{m}{N-m}}\,.
\end{align*}

We now show optimality of this constant $\widetilde\beta_0(m,N)$.  Without loss of generality we may take $\Omega = B_2$ (otherwise the construction can be translated and dilated appropriately).  Assume  that
 for some $\beta$ the inequality
\begin{equation}
\label{eq:B4}
\int_{B_2} \exp\left (\beta \left| \frac{u(x)}{\||\nabla^m u|\|_{L^p}}\right|^{p^\prime}\right)\;dx \leq A|B_2|\,,
\end{equation}
holds for all $u\ne0$ in $W^{m,p}_0(B_2)$. Our objective is to show that necessarily $\beta\le\widetilde\beta_0(m,N)$. Taking the test functions $u_\varepsilon$ constructed in Proposition \ref{prop:prop}, we observe
\begin{equation}
\label{eq:ump}
\begin{aligned}
\| |\nabla^m u_\varepsilon| \|^p_{L^p} &=  \int_{B_1\setminus B_\varepsilon} |\nabla^m u_\varepsilon|^p + O(1)\\
& = \int_{B_1 \setminus B_\varepsilon} |\nabla^m \log |x| |^p + O(1) 
= \left(\ell^m_N\right)^{p/2} \int_{B_1 \setminus B_\varepsilon} \frac{1}{|x|^N}+O(1)\\
&\phantom{a}\leq  (1+\delta)^{p/p^\prime}\left(\ell^m_N\right)^{p/2}\SNO\log \frac{1}{2\varepsilon}\,,
\end{aligned}
\end{equation}
for every $\delta>0$ and $\varepsilon\leq\varepsilon_0(\delta)$.
 Applying \eqref{eq:B4} with $u=u_\varepsilon$, taking into account \eqref{eq:ump} and the fact that on $B_{2\varepsilon}\setminus B_\varepsilon$ we have $u_\varepsilon(x)=\log 1/|x|\geq \log 1/(2\varepsilon)$, yields  
\begin{equation}
\label{eq:dmp}
\begin{aligned}
A|B_2|&\geq \int_{B_{2\varepsilon}\setminus B_\varepsilon}\exp\left (\beta \left| \frac{u_\varepsilon(x)}{\||\nabla^m u_\varepsilon|\|_{L^p}}\right|^{p^\prime}\right)\;dx\\
&\geq C(N)\varepsilon^N\exp\left(\frac{\beta(\log\frac{1}{2\varepsilon})^{p^\prime}}{(1+\delta)\left(\ell^m_N\right)^{p^\prime/2}\SNO^{p^\prime/p}(\log\frac{1}{2\varepsilon})^{p^\prime/p}}   \right)
\\ &=\frac{C(N)}{2^N}\left(\frac{1}{2\varepsilon}\right)^{\frac{\beta}{(1+\delta)\left(\ell^m_N\right)^{p^\prime/2}\SNO^{p^\prime/p}}-N}\;.
\end{aligned}
\end{equation}
Letting $\varepsilon\to0$ we deduce from \eqref{eq:dmp} that we must have
\begin{equation*}
\beta\leq N(1+\delta)\left(\ell^m_N\right)^{p^\prime/2}\SNO^{p^\prime/p}.
\end{equation*}
Since $\delta>0$ is arbitrary, we finally obtain that $\beta\leq\widetilde\beta_0(m,N)$.
\end{proof}

We now proceed to prove Theorem \ref{thm5}.  The proof follows in analogy with that of Theorem \ref{thm4}, though we provide the details with precise track of the constants for the convenience of the reader.  
\begin{proof}
Here we begin with the inequality established in Corollary \ref{intermediate_derivatives}, which asserts
\begin{align*}
|u(x)| \leq \frac{\gamma(m)}{\gamma(m-k)\sqrt{\lambda_N^{k-m,k}}} I_m |\nabla^k (-\Delta)^{(m-k)/2}u|.
\end{align*}
Then in this setting we invoke Theorem \ref{Adams} with $f= |\nabla^k (-\Delta)^{(m-k)/2}u|$ and $\alpha=m$ to obtain the estimate
\begin{align*}
\int_\Omega \exp\left (\frac{N}{\omega_{N-1}}\gamma(m)^{p^\prime} \left| \frac{I_m |\nabla^k (-\Delta)^{(m-k)/2}u|}{\||\nabla^k (-\Delta)^{(m-k)/2}u|\|_{L^p}}\right|^{p^\prime}\right)\;dx \leq A|\Omega|\,,
\end{align*}
while again monotonicity of the exponential then implies
\begin{align*}
\int_\Omega \exp\left (\frac{N}{\omega_{N-1}} \Big(\gamma(m-k)\sqrt{\lambda_N^{k-m,k}}\Big)^{p^\prime}\left| \frac{u(x)}{\||\nabla^k (-\Delta)^{(m-k)/2}u|\|_{L^p}}\right|^{p^\prime}\right)\;dx \leq A|\Omega|\,.
\end{align*}
This establishes the estimate with 
\begin{align*}
\widetilde\beta_0(m,k,N) = \frac{N}{\omega_{N-1}} \Big(\gamma(m-k)\sqrt{\lambda_N^{k-m,k}}\Big)^{p^\prime}.
\end{align*}

The proof of the optimality of the constant $\widetilde\beta_0(m,k,N)$ follows by the same argument used in the proof of \rth{thm4}, i.e., using the functions $\{u_\varepsilon\}$ on $B_2$; the details are left to the interested reader. 
\end{proof}

Finally, we conclude with the proof of Proposition \ref{thm3}.

\begin{proof}[Proof of Proposition~\ref{thm3}]
By approximation in the strict topology on $M_b(\mathbb{R}^N)$, it suffices to prove the result for $u \in C^\infty_c(\mathbb{R}^N)$.  Thus, we begin by using the duality of $\mathcal{H}^1(\mathbb{R}^N)$ and $BMO(\mathbb{R}^N)$ to realize the semi-norm of $u \in C^\infty_c(\mathbb{R}^N)$ as
\begin{align}
|u|_{BMO} &= \sup_{f \in \mathcal{H}^1(\mathbb{R}^N), \|f\| \leq 1} \int_{\mathbb{R}^N} uf. \label{BMO-duality}
\end{align}
Now by the semi-group property of the Riesz potentials and fractional Laplacian we find
\begin{align*}
\int_{\mathbb{R}^N} uf\,dx = \int_{\mathbb{R}^N} \big(I_{N-1} (-\Delta)^{N/2}u\big) \big(I_1f\big)\,dx.
\end{align*}
Let us expand the term $I_{N-1} (-\Delta)^{N/2}u$, where we find
\begin{align*}
I_{N-1} (-\Delta)^{N/2}u &= \frac{1}{\gamma(N-1)} \int_{\mathbb{R}^N} (-\Delta)^{N/2} u(y) \frac{1}{|x-y|}\;dy \\
&=\frac{1}{(N-1)\gamma(N-1)} \int_{\mathbb{R}^N} (-\Delta)^{N/2} u(y) \div_x \left(\frac{x-y}{|x-y|}\right)\;dy.
\end{align*}
However, now performing an integration by parts and Fubini's theorem we have
\begin{align*}
 \int_{\mathbb{R}^N} uf\,dx= -\frac{1}{(N-1)\gamma(N-1)} \int_{{\R}^N} \Big\{Rf(x) \cdot  \int_{\mathbb{R}^N} (-\Delta)^{N/2} u(y) \frac{x-y}{|x-y|}\,dy\Big\}\,dx
\end{align*}
Thus by the finite dimensional Cauchy Schwartz inequality we deduce
\begin{align}
\left|\int_{\mathbb{R}^N} uf\;dx\right| \leq  c_1\|(-\Delta)^{N/2}u\|_{L^1(\mathbb{R}^N)} \|Rf\|_{L^1(\mathbb{R}^N)}. \label{estimate-2}
\end{align}
As
\begin{align*}
 \|Rf\|_{L^1(\mathbb{R}^N)} \leq  \int_{\mathbb{R}^N} |(f,Rf)| = \|f\|_{\mathcal{H}^1(\mathbb{R}^N)},
\end{align*}
we find that \eqref{BMO-duality} and \eqref{estimate-2} yield the desired result with
\begin{align*}
c_0 = \frac{1}{(N-1)\gamma(N-1)}\; (=c_1=\frac{1}{\widetilde{\gamma}(N-1)}).
\end{align*}  
\end{proof}

 \section*{Acknowledgements}
I.S. is supported by the Israel Science Foundation (Grant No. 999/13).  D.S. is supported by the Taiwan Ministry of Science and Technology under research grant 105-2115-M-009-004-MY2.  Part of this work was written while I.S. was visiting National Chiao Tung University with support from the National Center for Theoretical Sciences.  He would like to thank NCTU for its warm hospitality and the NCTS for its support to visit.

\begin{bibdiv}

\begin{biblist}

\bib{Adams:1988}{article}{
   author={Adams, David R.},
   title={A sharp inequality of J. Moser for higher order derivatives},
   journal={Ann. of Math. (2)},
   volume={128},
   date={1988},
   number={2},
   pages={385--398},
   issn={0003-486X},
   review={\MR{960950}},
   doi={10.2307/1971445},
}

\bib{Fefferman}{article}{
   author={Fefferman, Charles},
   title={Characterizations of bounded mean oscillation},
   journal={Bull. Amer. Math. Soc.},
   volume={77},
   date={1971},
   pages={587--588},
   issn={0002-9904},
   review={\MR{0280994}},
   doi={10.1090/S0002-9904-1971-12763-5},
}

\bib{FeffermanStein}{article}{
   author={Fefferman, C.},
   author={Stein, E. M.},
   title={$H^{p}$ spaces of several variables},
   journal={Acta Math.},
   volume={129},
   date={1972},
   number={3-4},
   pages={137--193},
   issn={0001-5962},
   review={\MR{0447953}},
   doi={10.1007/BF02392215},
}

\bib{Garg-Spector:2015}{article}{
   author={Garg, Rahul},
   author={Spector, Daniel},
   title={On the regularity of solutions to Poisson's equation},
   journal={C. R. Math. Acad. Sci. Paris},
   volume={353},
   date={2015},
   number={9},
   pages={819--823},
   issn={1631-073X},
   review={\MR{3377679}},
   doi={10.1016/j.crma.2015.07.001},
}

\bib{Garg-Spector:2015-2}{article}{
   author={Garg, Rahul},
   author={Spector, Daniel},
   title={On the role of Riesz potentials in Poisson's equation and Sobolev
   embeddings},
   journal={Indiana Univ. Math. J.},
   volume={64},
   date={2015},
   number={6},
   pages={1697--1719},
   issn={0022-2518},
   review={\MR{3436232}},
   doi={10.1512/iumj.2015.64.5706},
}

\bib{hn}{article}{
   author={Humbert, Emmanuel},
   author={Nazaret, Bruno},
   title={The embedding $W^{n,1}$ into $C_0$ on compact Riemannian
   manifolds},
   journal={Potential Anal.},
   volume={19},
   date={2003},
   number={4},
   pages={301--316},
   issn={0926-2601},
   review={\MR{1988108}},
}

\bib{JohnNirenberg}{article}{
   author={John, F.},
   author={Nirenberg, L.},
   title={On functions of bounded mean oscillation},
   journal={Comm. Pure Appl. Math.},
   volume={14},
   date={1961},
   pages={415--426},
   issn={0010-3640},
   review={\MR{0131498}},
   doi={10.1002/cpa.3160140317},
}
\bib{moser}{article}{
	AUTHOR = {Moser, J.},
	TITLE = {A sharp form of an inequality by {N}. {T}rudinger},
	JOURNAL = {Indiana Univ. Math. J.},
	VOLUME = {20},
	YEAR = {1970/71},
	PAGES = {1077--1092},
	ISSN = {0022-2518},
	DOI = {10.1512/iumj.1971.20.20101},
	URL = {https://doi.org/10.1512/iumj.1971.20.20101},
}

\bib{MoriiSatoSawano}{article}{
   author={Morii, Kei},
   author={Sato, Tokushi},
   author={Sawano, Yoshihiro},
   title={Certain identities on derivatives of radial homogeneous and
   logarithmic functions},
   journal={Commun. Math. Anal.},
   volume={9},
   date={2010},
   number={2},
   pages={51--66},
   issn={1938-9787},
   review={\MR{2737754}},
}

\bib{ornstein}{article}{
	author={Ornstein, Donald},
	title={A non-equality for differential operators in the {$L_{1}$}
		norm },
	journal={Arch. Rational Mech. Anal.},
	date={1962},
	pages={40--49},
	issn= {0003-9527},
}

\bib{sch}{book}{
    AUTHOR = {Schwartz, Laurent},
     TITLE = {Th\'eorie des distributions},
    SERIES = {Publications de l'Institut de Math\'ematique de l'Universit\'e de
              Strasbourg, No. IX-X. Nouvelle \'edition, enti\'erement corrig\'ee,
              refondue et augment\'ee},
 PUBLISHER = {Hermann, Paris},
      YEAR = {1966},
     PAGES = {xiii+420},
}
\bib{Shafrir:2017}{article}{
   author={Shafrir, Itai},
   title={The best constant in the embedding of $W^{N,1}(\mathbb{R}^N)$ into $L^\infty(\mathbb{R}^N)$},
   journal={Potential Analysis},
   volume={},
   date={2018},
   number={},
   pages={},
   issn={},
   review={},
   doi={10.1007/s11118-018-9695-5},
}

\bib{Stein:1970}{book}{
   author={Stein, Elias M.},
   title={Singular integrals and differentiability properties of functions},
   series={Princeton Mathematical Series, No. 30},
   publisher={Princeton University Press, Princeton, N.J.},
   date={1970},
   pages={xiv+290},
   review={\MR{0290095}},
}

\bib{SteinWeiss}{article}{
   author={Stein, Elias M.},
   author={Weiss, Guido},
   title={On the theory of harmonic functions of several variables. I. The
   theory of $H^{p}$-spaces},
   journal={Acta Math.},
   volume={103},
   date={1960},
   pages={25--62},
   issn={0001-5962},
   review={\MR{0121579}},
   doi={10.1007/BF02546524},
}

\end{biblist}
	
\end{bibdiv}

\end{document}